\newtheorem{thm}{Theorem}[section]
\newtheorem{theorem}[thm]{Theorem}
\newtheorem{proposition}[thm]{Proposition}
\newtheorem{corollary}[thm]{Corollary}
\newtheorem{lemma}[thm]{Lemma}
\theoremstyle{definition}
\newtheorem{definition}[thm]{Definition}
\newtheorem{construction}[thm]{Construction}
\theoremstyle{remark}
\newtheorem{remark}[thm]{Remark}
\newtheorem{example}[thm]{Example}
\newcommand{\R}{{\mathbb{R}}}
\newcommand{\C}{{\mathbb{C}}}
\newcommand{\Q}{{\mathbb{Q}}}
\newcommand{\Z}{{\mathbb{Z}}}
\begin{document}

\title{Quantum mirrors of cubic planar graph Legendrians} 
\author{Matthias Scharitzer}
\address{Centre for Quantum Mathematics, SDU, Campusvej 55, 5230 Odense M, Denmark}
\email{matthias.scharitzer@gmail.com} 
\author{Vivek Shende}
\address{Centre for Quantum Mathematics, SDU, Campusvej 55, 5230 Odense M, Denmark 
$\qquad \qquad$ \& Department of Mathematics, UC Berkeley, Evans Hall, Berkeley CA 94720, USA}
\email{vivek.vijay.shende@gmail.com}

\maketitle

\begin{abstract}
For a certain class of Legendrian surfaces in the five-sphere, associated to cubic planar graphs, 
we show that the all-genus skein-valued holomorphic curve invariants of any filling
are annihilated by certain explicit skein-valued operator equations.  
\end{abstract}

\thispagestyle{empty}

\section{Introduction}

By {\em quantum} mirror symmetry, we mean the phenomenon where the all genus partition function
on the A-model side of mirror symmetry is a `wave function for', i.e. is annihilated by, 
certain operators which quantize the moduli space
of the B-model mirror.  In the closed topological string theory, such a relation is derived physically
from the holomorphic anomaly equation \cite{BCOV, Witten-background}.  For the open topological
string, such a relation
was derived physically \cite{Aganagic-Vafa, ADKMV, AV2, AENV} from consideration of the quantum 
Chern-Simons theory which is carried by Lagrangian branes \cite{Witten}.  
The power of
(quantum) mirror symmetry is that (the quantization of) the B-model is mathematically simpler than,
and yet often uniquely or nearly uniquely
determines, the (all genus) A-model partition function.  However, mathematical proofs  often
proceed in the reverse direction:  first computing or nearly computing the all genus partition function, then showing
it satisfies the desired recursion.  Some  mathematical
results of this nature include \cite{Eynard-Orantin-remodeling, FLZ-remodeling, Bousseau}. 

Given that the physical arguments for quantum mirror symmetry are built from Chern-Simons theory, 
it is notable in this context that it has recently been understood how to capture in mathematics some of the relation
between open topological string and Chern-Simons theory \cite{SOB}.  
The fundamental observation is that the boundaries of the moduli
space of holomorphic maps from curves-with-boundary can be grouped together in such a way that 
in each group, we meet a collection of curves-with-boundary whose boundaries themselves satisfy 
the HOMFLYPT skein
relation.  Hence: one can invariantly count holomorphic curves 
by the classes of their boundaries in the skein module of the Lagrangian.  

In fact, this skeins-on-branes formalism allows a transparent {\em a priori} derivation of quantum mirror symmetry, 
at least in the case when the Calabi-Yau 3-fold $X$ is noncompact with convex end, and the Lagrangian $L\subset X$
is asymptotic to a Legendrian submanifold of the ideal contact boundary which is Reeb-positive \cite{unknot}.  
The idea is the following.  The coefficients of the equations that cut out the mirror moduli
space (the ``augmentation variety'') are themselves counts of holomorphic curves
in a symplectization $(\partial X \times \R, \partial L \times \R)$.  Thus, the assertion that some quantization 
annihilates the curve counting invariants can be cashed out into an assertion that a certain (signed) count of holomorphic curves 
is zero.  The usual geometric way to establish such an identity is to exhibit said count as the boundary of some 
one-dimensional moduli space.  Here the relevant space parameterizes curves in $X$ with boundary along $L$ and asymptotic to an index one 
Reeb chord of $\partial L$.  

In \cite{unknot} this method was used to treat
perhaps the simplest example: the so-called Harvey-Lawson brane in $\C^3$,
which is a smoothing of the positive real cone through the torus
$$(e^{i \theta_1}, e^{i \theta_2}, e^{i \theta_3}) \qquad \qquad \theta_1 + \theta_2 + \theta_3 = 0$$
Counting holomorphic curves ending on said Lagrangian is a problem long
studied both in the string theory \cite{Aganagic-Vafa,  AKV} and math \cite{Liu, Katz-Liu, Fang-Liu} literature, and serves as a 
starting point for many generalizations \cite{AKMV, AENV}.  A distinctive feature of the work \cite{unknot} 
is that one {\em first} proves quantum mirror symmetry and {\em then} solves the operator equation for the partition function.

\vspace{2mm} 

In \cite{Treumann-Zaslow}, Treumann and Zaslow consider asymptotically conic 
Lagrangians in $\C^3$ defined
by taking boundary connect sums of Harvey-Lawson branes.   
The connect sums in
question are organized combinatorially by tetrahedronizations of a real 3-ball.  
Let us summarize the construction.
For an appropriate choice of identification $\C^3 = T^* \R^3$, the ideal boundary of
the Harvey-Lawson cone is a Legendrian surface in $S^5$ which 
projects 2:1 to the ideal boundary $S^2$ of $\R^3$, branched along four points.  
This ideal boundary can be described as a Legendrian in $J^1 S^2 \subset S^5$
via a front projection to $S^2 \times \R$.  The front projection is an embedding away from a tetrahedron graph 
in $S^2 \times 0$, and is 2:1 along the interiors of the edges.   The construction of \cite{Treumann-Zaslow}
is essentially to glue this  description along the faces of the {\em dual} tetrahedron.  The combinatorics
of the construction is captured by a trivalent graph on the $S^2$ filled by a `foam' with certain properties. 

\vspace{2mm} 

In the present article, we determine the quantum mirrors for these Legendrians.  As in \cite{unknot}, 
these are skein-valued operator equations which annihilate the skein-valued partition function
of any Lagrangian filling of the Legendrian boundary. 

\vspace{2mm}

We do not solve this operator equation.  However, the specialization of our operator to the $U(1)$ skein, 
i.e. an appropriate quantum torus, was previously discovered by  Schrader, Shen, and Zaslow, 
who conjectured a corresponding quantum mirror symmetry \cite{Zaslow-wave, Schrader-Shen-Zaslow}.
They moreover solved the operator equation (in the specialization, a q-difference equation) for a certain
class of fillings.  

Let us explain briefly how \cite{Schrader-Shen-Zaslow} arrived at, and solved, this equation.   Previously in \cite{Treumann-Zaslow},
it was observed that flips of trivalent graphs correspond to disk surgeries of Legendrians, and
the corresponding moduli of objects transform according to the cluster transformations for the cluster structure on
the moduli of rank two local systems on the sphere.\footnote{A similar phenomenon was previously 
observed in \cite{STWZ, STW}, though in that setting the corresponding moduli {\em were} 
the cluster charts $(\C^*)^{2g}$, whereas in 
\cite{Treumann-Zaslow} the moduli  were Lagrangians in said cluster charts.  The relation is explained in \cite{Casals-Li}.
The same cluster structure arising from a different view of the same physical system
appears also in \cite{CEHRV, DGGo}. 
}
In \cite{Schrader-Shen-Zaslow} it was shown that the whole picture quantizes 
compatibly with the quantum cluster algebra of \cite{Fock-Goncharov-quantum}, which, moreover,
provides solutions of the resulting q-difference equations.  Thus, on the physical grounds mentioned above, 
\cite{Schrader-Shen-Zaslow} conjecture that these q-difference equations should annihilate an open Gromov-Witten
partition function.  

\vspace{2mm}

In the present article, cluster structures will not appear.  However, if one wished to solve the operator
equation in the full skein, one plausible approach would be to upgrade the cluster algebra calculations carried out
in \cite{Schrader-Shen-Zaslow} to some (presently unknown) skein-valued cluster algebra.  
It would also be useful to have an explicit 
algebraic expression for the action of $Sk(\partial L)$ on $Sk(L)$, generalizing the genus one formulas of  \cite{Morton-Samuelson}.
Finding either seems a promising direction of future research. 

\vspace{2mm}
{\bf Recent developments.} Since the first appearance of this article, the question in the previous paragraph has been explored by curve counting methods in \cite{SS-2} and algebraically in \cite{HSZ}. 

\vspace{2mm}
{\bf Acknowledgements.} 
We thank Roger Casals, Tobias Ekholm, and Peter Samuelson for helpful conversations.  
The work presented in this article is supported by Novo Nordisk Foundation grant NNF20OC0066298, 
Villum Fonden Villum Investigator grant 37814, and Danish National Research Foundation grant DNRF157.  

\vspace{4mm}

\section{The face relation of \cite{Schrader-Shen-Zaslow}, formulated in the linking skein} \label{sec: face} 

In  \cite[Eq. 4.3.3]{Schrader-Shen-Zaslow} appears a certain element of a certain quantum torus. 
Here we recall the relation of quantum tori with linking skeins, and then describe their element skein
theoretically.  We use $q^{1/2}$ where \cite{Schrader-Shen-Zaslow} use $q$. 

Recall that to any lattice $N$ with an antisymmetric pairing, the associated quantum torus
is the quotient of the free non-commutative polynomial algebra over $\Z[q^{\pm 1/2}]$ 
with generators $[n]$ for $n \in N$ by 
the relations
$$q^{- (n,m)/2} [n][m] = [n+m] = q^{- (m,n)/2} [m][n]$$ 

Quantum tori arise naturally as linking skeins of surfaces; we recall
now this identification. 
To any oriented 3-manifold $M$, the {\em linking skein} $Lk(M)$ \cite{Przytycki} is the  module over $\Z[q^{\pm 1/2}]$ generated by formal
$\Z[q^{\pm 1/2}]$ linear combinations of oriented framed links, 
subject to the following skein relations:  
$$\overcrossing = q^{1/2} \,\, \smoothing \qquad \qquad \qquad \bigcirc = 1$$
A choice of framed links giving a basis for $H_1(M, \Z)$ determines an isomorphism
$$\Z[q^{\pm 1/2}][H_1(M, \Z)] \xrightarrow{\sim} Lk(M)$$   
In particular, under $Lk(S^3) \xrightarrow{\sim} \Z[q^{\pm 1/2}]$, 
a link $L$ is sent to a power of $q$ given by the `total linking number' of $L$, hence the name. 

For a surface $\Sigma$, the linking skein $Lk(\Sigma) := Lk(\Sigma \times \R)$ acquires an algebra
structure with product given by concatenating
$\R \sqcup \R \to \R$.  In fact,  $Lk(\Sigma)$ is the quantum torus associated to 
the lattice $H_1(\Sigma, \Z)$  with skew symmetric form coming from the Poincar\'e pairing. 
Representative elements can be constructed as follows.  For a simple closed oriented multi-curve 
$C \subset \Sigma$, we write $[C] \in Lk(\Sigma)$ to mean $C \times 0 \subset \Sigma \times 0 \subset \Sigma \times \R$, 
framed by the positive vector in the $\R$ direction.  

If $M$ is a 3-manifold with boundary $\Sigma$, then there is similarly an action 
$$Lk(\partial M) \times Lk(M) \to Lk(M)$$ 
In case $M$ is a handlebody filling $\Sigma$, then 
the resulting action is the usual action of the quantum torus on a polynomial representation.

\begin{remark}
To 
see the quantum torus along with its polynomial representation 
`in coordinates', choose standard representatives of a symplectic 
basis of $H_1(\Sigma, \Z)$ compatible with the 
handlebody; i.e. some oriented simple closed curves on $\Sigma$, say
$A_1, \ldots, A_g$ and $B_1, \ldots, B_g$ on $\Sigma$, such that 
the $A_i$ are disjoint from each other, the $B_j$ are disjoint from each other, 
and $A_i \cdot B_j = 1$ given by a transverse single geometric intersection; 
such that moreover the $B_j$ become contractible in the handlebody $M$.   
Now $Lk(\Sigma)$ is identified with the ring freely generated by variables 
$A_i, B_j$ with $A_i B_i = q B_i A_i$ and all other commutators trivial; 
$Lk(M)$ is identified with $\Z[q^{\pm 1/2}][A_i]$; and the action is induced from
$B_i \cdot 1 = 1$.  
\end{remark}


We now discuss the quantum tori which appear in \cite{Schrader-Shen-Zaslow} and
their skein theoretic interpretations.  Let us first recall their general setup. 
Fix once and for all an orientation on $S^2$.  
Fix a trivalent graph $\Gamma$ on $S^2$.  There is a Legendrian
$\Lambda_\Gamma \subset J^1 S^2$ such that $\pi: \Lambda_\Gamma \to S^2$  is  2:1 and ramified only 
at the vertices of $\Gamma$, and such that the front projection $\Gamma \to S^2 \times \R$ 
is an embedding away from the edges of $\Gamma$.  

For a face $F$ of $\Gamma$, we write $F^+ \subset \Lambda_\Gamma$ for the `upper' sheet of
$\pi^{-1}(F)$ and $F^-$ for the lower sheet; here upper and lower refers to the $\R$ coordinate of
the front projection.  
Note $\pi^{-1}(\Gamma)$ determines a graph on the oriented surface $\Lambda_\Gamma$, the faces of which
are bicolored by $+$ and $-$.  We fix a point $p_F$ in each face $F$, and write
$p_F^+$ and $p_F^-$ for its preimages in $F^+$ and $F^-$. 

We orient the edges 
of $\pi^{-1}(\Gamma)$ compatibly with the boundary orientation of $+$ faces, and anticompatibly
with the boundary orientation of $-$ faces.
If $E$ is an edge of $\Gamma$, then $\widetilde{E} := \pi^{-1}(E)$ is a simple closed curve on $\Lambda_\Gamma$, 
comprised of two edges of $\pi^{-1}(\Gamma)$, which may be oriented by the convention above.  These
orientations are compatible and determine an orientation of $\widetilde{E}$. 

In \cite{Schrader-Shen-Zaslow} appear two quantum tori associated to $\Gamma$. 
The first is denoted $\mathcal{T}^q_{\Gamma}$, and is
the quantum torus associated to the lattice formally generated by the edges,
$E$ of the graph $\Gamma$, where the pairing is inherited from the intersection
pairing of the $\widetilde{E}$.  From this definition and the properties of the linking skein, it is apparent
that there is an embedding:
\begin{eqnarray*}
\mathcal{T}^q_{\Gamma} & \hookrightarrow & Lk(\Lambda_\Gamma   \setminus \bigcup_F p_F^{\pm}) \\
\, [E] & \mapsto & [\widetilde{E}]
\end{eqnarray*}

The second quantum torus is the quotient of $\mathcal{T}^q_{\Gamma}$ by certain relations, which
we will express in the skein. 
For a face $F$ of $\Gamma$, we write a curve traveling the positively oriented boundary of $F^+$ 
as $\partial F^+$, and a curve traveling the negatively oriented boundary of $F^-$ as $\partial^- F^-$.  
Then: 
\begin{equation} \label{twisted quotient}
\mathcal{T}^q_{\underline{\Gamma}} := 
\frac{Lk(\Lambda_\Gamma   \setminus \bigcup_F p_F^{\pm})}{(q^{1/2} + [\partial F^+], q^{1/2} +  [\partial^- F^-])}
\end{equation} 

In order to track the relation introduced in \eqref{twisted quotient} skein theoretically, we introduce
new objects into the skein, called framing lines and sign lines. 
A framing line $\mathfrak{l} \subset M$ is a piecewise smooth integer $1$-chain (e.g. oriented, unframed link with multiplicity).  
A sign line $\mathfrak{s} \subset M$ is a piecewise smooth $\Z/2\Z$ $1$-chain (e.g. unoriented, unframed link). 

\begin{definition}
We write 
$Lk(M, \mathfrak{l}, (-1)^{\mathfrak{s}})$ 
for the linking skein of links in $M \setminus \mathfrak{l}$, modulo the 
additional relation that crossing $\mathfrak{l}$ changes the framing of a link by the change of linking with $\mathfrak{l}$, 
and crossing $\mathfrak{s}$ multiplies by $-1$.  
If there are no framing lines, or no sign lines, then we omit the corresponding term from the notation. 

In case $M$ has boundary, we allow $\mathfrak{l}, \mathfrak{s}$ to meet the boundary transversely. 
For surfaces with appropriate $0$-chains $\mathfrak{p}, \mathfrak{q}$, we write 
$Lk(\Sigma, \mathfrak{p}, (-1)^{\mathfrak{q}}) := Lk(\Sigma \times \R, \mathfrak{p} \times \R, 
(-1)^{\mathfrak{q} \times \R})$.  
Again $Lk(\partial M, \partial \mathfrak{l}, (-1)^{\partial \mathfrak{s}})$ acts on 
$Lk(M, \mathfrak{l}, (-1)^{ \mathfrak{s}})$ by concatenation.  
\end{definition}

Thus if we set
$$\mathfrak{p}_{\Gamma} = \sum_F p_{F}^+ - p_{F}^-$$
then
\begin{equation}
\mathcal{T}^q_{\underline{\Gamma}} = 
Lk(\Lambda_\Gamma, \mathfrak{p}_{\Gamma},  (-1)^{\mathfrak{p}_{\Gamma}}).
\end{equation}

The skeins $Lk(\Lambda_\Gamma)$ and 
$\mathcal{T}^q_{\underline{\Gamma}} = Lk(\Lambda_\Gamma, \mathfrak{p}_{\Gamma},  (-1)^{\mathfrak{p}_{\Gamma}})$ are
(non canonically) isomorphic.   Indeed, 
let $\mathfrak{l}$ be any oriented tangle in $\Lambda_\Gamma \times \R$ such that
$\partial \mathfrak{l} = (\mathfrak{p}_{\Gamma} \times \infty) \sqcup (\emptyset \times -\infty)$.  
Such links exist, e.g. fix any paths on $\Lambda_\Gamma \times \infty$ connecting each 
$p_F^+$ to the corresponding $p_F^-$, 
and push them into $\Lambda_\Gamma \times \R$. 
Then $Lk(\Lambda_\Gamma \times \R, \mathfrak{l}, (-1)^{\mathfrak{l}})$ determines an isomorphism
given by ``pushing the skeins from the top to the bottom, through the sign and framing lines''. 
If $\mathfrak{l}$ arose from some paths $\ell$ as suggested above, 
then the isomorphism is computed by the formula
\begin{align*}
Lk(\Lambda_\Gamma, \mathfrak{p}_{\Gamma}, (-1)^{\mathfrak{p}_{\Gamma}}) & \to Lk(\Lambda_\Gamma) \\
[C] & \mapsto (-q^{1/2})^{\langle \ell, C\rangle} [C]
\end{align*}
where the pairing is the intersection pairing in $\Lambda_\Gamma \setminus \bigcup_F p_F^{\pm}$.  

\begin{remark}
That is, the above setup provides an isomorphism determined by a linear functional on
$H_1(\Lambda_\Gamma \setminus \bigcup_F p_F^{\pm}, \Z)$. 
Such an isomorphism, together with a choice of $A$ and $B$ cycles on $\Lambda_\Gamma$, 
is what is called a `framing' in  \cite{Schrader-Shen-Zaslow}. 
\end{remark}

If $L_\Gamma$ is a handlebody filling of $\Lambda_\Gamma$, 
and $\mathfrak{l}_\Gamma \subset L_\Gamma$ is a tangle with $\partial \mathfrak{l}_\Gamma = \mathfrak{p}_\Gamma$,
then  
$Lk(L_\Gamma, \mathfrak{l}_\Gamma, (-1)^{\mathfrak{l}_\Gamma})$ 
carries an action of 
$Lk(\Lambda_\gamma, \mathfrak{p}_\Gamma, (-1)^{\mathfrak{p}_{\Gamma}})$.  
Under the above isomorphism, this is identified with the polynomial representation of 
the quantum torus 

\vspace{2mm} 
Now let us (re)formulate the main conjecture of \cite{Schrader-Shen-Zaslow}. 

\begin{definition} \label{def: ell} 
Let $F$ be a face of $\Gamma$, fix a vertex $v = v_1$, and let 
$v_1, \ldots, v_n$ be the vertices of $F$ read in counterclockwise order.  
For $k = 2, \ldots, n$, let $\ell_k = \ell_{k}(F, v) $ be the loop which begins at $v_1$, travels in $F^+$  counterclockwise near $\partial F^+$ until
$v_k$, where it crosses from $F^+$ to $F^-$ at $v_k$, then travels clockwise on $F^-$ near $\partial F^-$ back 
to $v_1$.    
\end{definition} 

Note that if $E_1, \ldots, E_n$ are the edges of $F$, indexed so that $E_i$ meets $v_i$ and $v_{i+1}$, then
$$\ell_k = \widetilde{E}_1 + \cdots + \widetilde{E}_{k-1} \in H_1(\Lambda_\Gamma)$$

\begin{definition} \label{def: R}
(\cite[Eq. 4.3.3]{Schrader-Shen-Zaslow} rewritten in skein) 
The face relation associated to $F$ is 
\begin{eqnarray*} 
R^q_{\Gamma, F, v} & = & q^{-1/2} \,\,\,\,\,\,\,\,\, + [\ell_2] + [\ell_3] + \cdots + [\ell_n] \in Lk(\Lambda_\Gamma, \mathfrak{p}_{\Gamma}, (-1)^{\mathfrak{p}_{\Gamma}}) \\
& = & q^{-1/2} [\bigcirc] + [\ell_2] + [\ell_3] + \cdots + [\ell_n] \in Lk(\Lambda_\Gamma, \mathfrak{p}_{\Gamma}, (-1)^{\mathfrak{p}_{\Gamma}})
\end{eqnarray*} 
\end{definition}

As noted in \cite{Schrader-Shen-Zaslow}, one can check that the ideal generated by $R_{\Gamma, F, v}$ does not depend on the choice of $v \in F$. 

The main conjecture of Schrader, Shen, and Zaslow in \cite[Sec 6.3]{Schrader-Shen-Zaslow} can then be translated to the statement 
that there should be some (by those authors unspecified) theory of open Gromov-Witten invariants, 
with the following properties: 
\begin{itemize}
\item For certain $L_\Gamma \subset \C^3$ constructed in that article with $\partial L_\Gamma = \Lambda_\Gamma$, 
there is an invariant
$$\Psi_{L_\Gamma} = 1 + \cdots \in Lk(L_\Gamma, \mathfrak{l}_\Gamma, (-1)^{\mathfrak{l}_\Gamma})$$
\item The invariant is annihilated by all face operators: 
$$ R^q_{\Gamma, F, v} \Psi_{L_\Gamma} = 0$$
\end{itemize}

\section{Skein valued curve counting and relations from contact infinity}  \label{sob story}

In this section we recall first the skein-valued open curve counting of  \cite{SOB} and, secondly, how, at least
in special cases, one can show that the count of curves ending on an asymptotically cylindrical Lagrangian $L$
is annihilated by a quantization of 
the augmentation variety of $\partial L$ \cite{unknot}. 

Recall that the HOMFLYPT framed skein relations are:

\vspace{2mm}
\begin{center}
\includegraphics[scale=0.2]{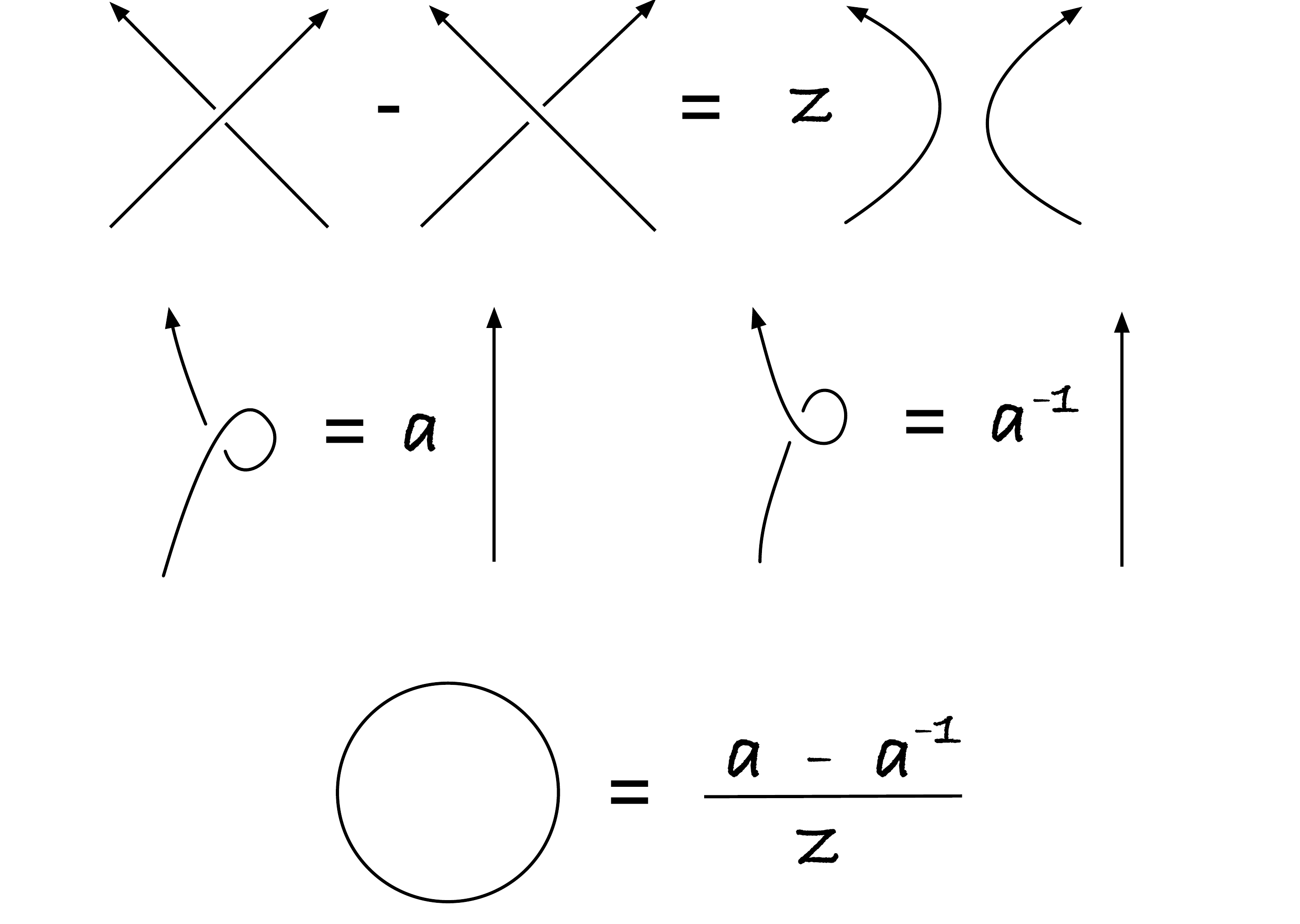}
\end{center} 
\vspace{2mm}

For any oriented 3-manifold $M$, 
we write $Sk(M)$ for the quotient by the HOMFLYPT skein relations of the free $\Z[a^{\pm}, z^{\pm}]$ module
generated by isotopy classes of oriented framed links.  
Note the linking skein relations imply the HOMFLYPT skein relations specialized at 
$a = q^{1/2}$ and $z = q^{1/2} - q^{-1/2}$.  Thus $Lk(M)$ is a quotient of $Sk(M)$.\footnote{This
corresponds to the fact that 
the HOMFLYPT skein organizes the Wilson lines for $U(n)$ Chern-Simons theory for unspecified $n$; the linking skein
corresponds to $n=1$.  See \cite{Gelca-theta} for a book-length discussion of the $U(1)$ theory.} 
Just as for the linking skein, one may introduce framing lines $\mathfrak{l} \subset M$
and consider $Sk(M, \mathfrak{l})$.  It is also possible to introduce sign lines,  though we postpone
discussion of their role in curve-counting until Section \ref{sec: spin sign}.

Let $X$ be a Calabi-Yau complex 3-fold, $L \subset X$ a Lagrangian with vanishing Maslov class. 
The article \cite{SOB} defines a count of holomorphic curves in $X$ with boundary on $L$ 
taking values in the framed HOMFLYPT skein module $\widehat{Sk}(L) := \Q \otimes Sk(L)[[H_2(X, L), z^{-1}]]$, where
the completion is along the cone of classes positive with respect to the symplectic form.\footnote{In case $H_2(X)$ 
has no classes on which the symplectic form is positive, e.g. $X$ exact, 
the class in $H_2(X, L) \hookrightarrow H_1(L)$ is already determined by the skein element,
so we may instead complete the skein.  Likewise a choice of splitting of $H_2(X) \to H_2(X, L)$ allows
to reduce coefficients to  $Sk(L)[[H_2(X)]]$.} 
(More precisely, in \cite{SOB} this is shown conditional on the existence of a perturbation setup
satisfying certain axiomatics; the required setup is constructed in the foundational works \cite{ghost, bare}.) 

The idea that holomorphic curves should be counted by their boundaries in the skein is in some sense
implicit in the belief  that Lagrangian branes in topological string theory
carry Chern-Simons fields, and that strings ending on these branes introduce line operators.  The string
arguments which establish this \cite{Witten} are carried out at the level of the path integrand and so are 
at present quite far from being made mathematically rigorous.  The argument in \cite{SOB} proceeds
instead directly at the level of moduli spaces of holomorphic curves.  The basic point 
is the following: while boundary bubbling phenomena 
would naively spoil the invariance of a numerical count of curves with boundary, in fact the boundary bubbling 
is always accompanied by the boundaries
of the holomorphic curves themselves transforming according to the skein relation.  So, it is well defined
to count curves weighted by the class of their boundaries in the skein module. 

Let us recall some unusual features of the curve counting setup of \cite{SOB, ghost, bare}.  First of all, one counts
possibly disconnected curves.  Second, the count is defined in terms of some auxilliary data: 

\begin{definition}
For $X$ a CY3-fold and $L \subset X$ a Lagrangian of vanishing Maslov class, 
a {\em brane structure} on $L$ is a spin structure, plus a 
vector field $v$ on $L$ with transverse zeros,
and a 4-chain $V$ such that $\partial V = 2L$ and $V = \pm J \cdot v$ near $\partial V$.  

We allow $V$
to meet $L$ also in its interior $V^\circ$ so long as it does so sufficiently transversely, in which case 
we will use $L \cap V^\circ$ as a framing line and consider 
$Sk(L, L\cap V^\circ)$.\footnote{In the current public version of \cite{SOB}, instead one is instructed
to trivialize $L \cap V^\circ$ by choosing a bounding 2-chain for it, but the arguments establish
the validity of the present mild generalization.}   

In case $X, L$ are noncompact with convex end, 
we demand $V, v$ to be asymptotically $\R$-invariant at the ideal boundary. 
\end{definition} 

The curve counts are invariant under deformation of brane structure, and 
depends on global change of $v, V$ only up to explicit monomial change of variable.  

Third, and most 
unusually compared to the usual Gromov-Witten
theory, while we work over a configuration space of maps from nodal curves, 
we perturb the holomorphic curve equation {\em only on irreducible components of curves 
whose images have nonzero symplectic area}.  Such components will then be embedded after appropriate generic
perturbation, but may still carry ghost bubbles and remain solutions.  In fact, however, 
a refinement of the Gromov compactness theorem shows that in 
0- and 1- dimensional moduli, the locus of curves {\em without} ghost bubbles is already compact \cite{SOB, ghost}.   
We consider only these curves, and count them by
$$\Psi_L := \sum_{C} \epsilon_C \cdot z^{-\chi(C)} \cdot a^{C \cap V} \cdot [\partial C] \in \widehat{Sk}(L)$$
Here, $\epsilon_C $ is a rational number (fractions arising from the use of multivalued perturbations), 
$\chi(C)$ is the Euler characteristic, $\partial C$ is framed using the vector field $v$, 
and $\widehat{Sk}(L)$ refers to the appropriate completion of $Sk(L, L \cap V^\circ)$ or more generally of $Sk(L, L \cap V^\circ)[H_2(X, L)]$. 

\begin{remark}
The arguments of the present article depend little on the details of the curve counting scheme in 
\cite{SOB}; one would expect them to apply to any skein-valued curve counting setup compatible with SFT stretching.  
The essential point of contact with \cite{SOB, ghost, bare} is  in the assertion that some such setup exists. 
\end{remark} 

We will set by definition $z = q^{1/2} - q^{-1/2}$.  

\begin{remark}
Because we do not count solutions with ghost bubbles, 
it is natural to expect from \cite{Pandharipande-degenerate} that $q$ compares to usual Gromov-Witten
variables as $q = e^{h}$, where $h$ is the Gromov-Witten genus counting parameter.  
For this reason we avoid referring to the skein-valued curve counting of \cite{SOB} 
as a ``Gromov-Witten'' theory.   (In particular, in terms of which variables appear, it is closer to Donaldson-Thomas theory.) 
\end{remark}

\vspace{2mm}

When $X, L$ are noncompact and eventually conical,
with convex end modeled on a contact manifold $\partial  X$ containing Legendrian $\partial L$, 
then the geometry at infinity provides a powerful tool to compute the skein-valued curve counts. 
For a contact manifold $Q$ and Legendrian $\Lambda$ of vanishing Maslov class,  
we say $(Q, \Lambda)$ is {\em Reeb positive} (resp. {\em non-negative}) if all Reeb orbits or chords have index $> 0$
(resp. $\ge 0$).  
Examples of non-negative $(\partial X, \partial L)$ are plentiful: for instance knot conormals all provide such Legendrians \cite{EENS}.
Positive examples are rarer: of knot conormals, only the unknot conormal is positive.  Nevertheless, it is useful to single out positivity 
because of the following consequence: 

\begin{lemma} \label{recursion}   \cite{unknot} 
Let $X$ be a symplectic manifold with convex end, and $L \subset X$ a Lagrangian asymptotic to a Legendrian
$\partial L \subset \partial X$.
Assume $(\partial X, \partial L)$ is Reeb positive.  Fix a brane structure for $L$, including vector field $v$ and 4-chain $V$.

For any Reeb chord $\rho$ of index $1$, let $$A_\rho := A_\rho(\partial X, \partial L) \in \widehat{Sk}(\R \times \partial L, \partial (L \cap V^{\circ}))$$
be the skein valued count of curves in the symplectization $(\R \times \partial X, \R \times \partial L)$, with boundary
on $\partial L$ and one positive puncture at $\rho$.\footnote{We choose a capping path for $\rho$, or alternatively, 
take $A_\rho$ valued in a skein with one entering and one exiting strand at $\infty \times \partial L$.}
Let $$\Psi = \Psi(X, L) \in \widehat{Sk}(L, L\cap V^{\circ})$$ be the count of holomorphic curves in $X$ ending on $L$.  
Then \begin{equation} \label{operator equation} A_\rho \Psi = 0 \end{equation} 
Here, $\widehat{Sk}(\R \times \partial L, \partial (L \cap V^{\circ}))$ acts on $\widehat{Sk}(L, L \cap V^\circ)$ by concatenation. 
\end{lemma}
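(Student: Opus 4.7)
The plan is to exhibit $A_\rho \Psi$ as the skein-valued signed count of the boundary of a single $1$-dimensional moduli space, so that the identity $A_\rho \Psi = 0$ becomes the assertion that that boundary is zero in the skein.

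The first step is to set up the moduli space. I would consider $\mathcal{M} = \mathcal{M}(X, L; \rho)$, the moduli of (possibly disconnected, with no ghost bubbles on positive-area components, in the sense of \cite{SOB, ghost, bare}) holomorphic curves in $X$ with boundary on $L$, carrying exactly one positive puncture asymptotic to $\rho$ and no other punctures. Because $L$ has vanishing Maslov class, $X$ is a CY $3$-fold, and $\ind(\rho)=1$, a standard Fredholm index count makes $\mathcal{M}$ $1$-dimensional after a generic perturbation compatible with the brane data $(v,V)$. The second step is to stretch the neck along $\partial X$ and appeal to SFT compactness with boundary: any limiting configuration is a two-level building with a top level in $(\R\times\partial X,\R\times\partial L)$ with positive puncture $\rho$ and negative Reeb asymptotics $\gamma_1,\dots,\gamma_k$, glued to a bottom level in $(X,L)$ with matching positive asymptotics at the $\gamma_i$. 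Dimension counting gives $\dim(\text{top}/\R) + \dim(\text{bottom}) = 0$, i.e. $\sum_i \ind(\gamma_i) \le 0$. Reeb positivity forces $\ind(\gamma_i) \ge 1$ for every chord or orbit, so the only possibility is $k=0$: the building decomposes cleanly into a rigid (mod $\R$) top-level curve contributing to $A_\rho$ and a closed-boundary bottom curve contributing to $\Psi$. Skein-valuedly, and using that the action of $\widehat{Sk}(\R\times\partial L,\partial(L\cap V^\circ))$ on $\widehat{Sk}(L, L\cap V^\circ)$ is by concatenation at infinity, this portion of the boundary contributes exactly $A_\rho \Psi$.

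The third step is to control the remaining boundary of $\mathcal{M}$. Besides Reeb breaking, a $1$-parameter family of curves-with-boundary in a CY $3$-fold acquires boundary strata from interior self-intersections crossing, boundary nodes / disk bubbling, transverse crossings of the $4$-chain $V^\circ$, and passages of boundary arcs through zeros of the framing vector field $v$. The entire point of the skein-valued counting scheme of \cite{SOB} is that the HOMFLYPT relations together with the framing and $V^\circ$ bookkeeping have been tuned so that the signed contributions of precisely these strata cancel in $\widehat{Sk}(L, L\cap V^\circ)$; one imports this cancellation verbatim. Since the total signed skein-valued boundary of a $1$-dimensional moduli space vanishes, and Reeb positivity has isolated the only nonvanishing contribution as $A_\rho \Psi$, we conclude $A_\rho \Psi = 0$.

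The main obstacle is analytic rather than conceptual: one must run neck stretching, boundary-building compactness, and the skein cancellations of \cite{SOB, ghost, bare} simultaneously, while keeping transversality on positive-area components and tolerating ghost components on zero-area ones, all in the noncompact convex-ended setting. The Reeb positivity hypothesis is what makes this tractable, since it rules out every intermediate breaking pattern that would otherwise require knowing the full skein-valued SFT of $(\partial X,\partial L)$; modulo this, the argument is the one given in \cite{unknot} for the Harvey--Lawson case, and the citation is used to finish.
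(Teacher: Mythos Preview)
Your proposal is correct and follows essentially the same approach as the paper: study the boundary of the one-dimensional moduli of curves in $(X,L)$ with a single positive puncture at $\rho$, use SFT compactness plus Reeb positivity to see that the only SFT breaking is a rigid symplectization curve over a filling curve (giving $A_\rho\Psi$), and invoke the skein-valued counting prescription of \cite{SOB} to cancel the internal boundary degenerations. Two small remarks: the paper does not actually stretch the neck but works directly with the SFT compactification of this moduli space in the already convex-ended $(X,L)$; and the paper adds the observation that if $\rho$ is the Reeb chord of minimal action, the unwanted negative-index breakings are excluded on action grounds alone, so one can sidestep the SFT transversality issue you flag at the end.
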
 
\begin{proof}
We recall the proof from \cite{unknot}: count curves at
the boundary of the 1-dimensional moduli space
of curves in $X$ with boundary on $L$ and one positive puncture at $\rho$.  
As a boundary, the count of such curves is zero.  By SFT compactness, 
\cite{BEHWZ}, the boundaries of this 1-dimensional moduli space come in two kinds: those 
coming from boundary breaking in the interior of $L$, and those associated to 
SFT breaking at infinity.  The term coming from boundary breaking is cancelled
by the skein-valued counting prescription.  By Reeb positivity, there 
are no possible SFT breakings along Reeb chords which will have expected dimension zero, so all
breakings at infinity must separate into some curve in the symplectization 
$(\partial X \times \R, \partial L \times \R)$ ending on the index one Reeb chord, 
disjoint union some curve in the interior of $(X, L)$.   The skein-valued count of such
disjoint unions is precisely $A_\rho \Psi$.  

Strictly speaking, the above argument requires SFT transversality in order to argue that
potential breakings of negative expected dimension do not in fact appear.  However, 
if $\rho$ is in fact a Reeb chord of minimal action, then such breakings are ruled out for action reasons, 
so SFT transversality is not required.  In this article we will always be able to ensure this lowest action condition. 
\end{proof}

Similar considerations in the non-negative case lead to similar but more complicated relations; 
see e.g. \cite{colored} for perhaps the simplest example.  
Beyond the non-negative case, virtual considerations would become essential.

For the unknot conormal, the $A_\rho$ of \eqref{operator equation} can be determined explicitly and solved 
for $\Psi$; the result agrees with
all-color HOMFLYPT invariant of the unknot \cite{unknot}.  
(Unfortunately, since other knot conormals are not positive, the result of \cite{unknot} exhausts the direct application of Lemma \ref{recursion}
to knot conormals.)  Also in \cite{unknot}, both $A_\rho$ and $\Psi$ are determined for the original Harvey-Lawson brane.  

We record the following simple criterion for positivity:

\begin{lemma} \label{satellite positivity}
Let $Q$ be a contact manifold, and suppose $\Lambda \subset Q$ is a Reeb-positive Legendrian.  Suppose
that $\Lambda' \subset J^1 \Lambda$ is Reeb-positive for the standard contact structure on the jet bundle.  
Then the Legendrian satellite $\Lambda' \# \Lambda \subset Q$ can be chosen Reeb-positive. 
\end{lemma}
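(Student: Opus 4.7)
The idea is to split Reeb chords of the satellite into ``short'' ones trapped in a Weinstein neighborhood of $\Lambda$ (which will inherit positivity from $\Lambda'$) and ``long'' ones which track Reeb chords of $\Lambda$ (inheriting positivity from $\Lambda$ in $Q$).

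First I would invoke the Legendrian Weinstein neighborhood theorem to produce a contactomorphism between a tubular neighborhood $U$ of $\Lambda$ in $Q$ and a neighborhood $V$ of the zero section of $J^1 \Lambda$. After a compatible choice of contact form on $Q$, one can further arrange that this contactomorphism identifies the Reeb vector field on $U$ with the standard vertical Reeb vector field $\partial_z$ on $J^1\Lambda$ induced from $dz - \lambda$. I would then define $\Lambda' \# \Lambda$ by first rescaling $\Lambda'$ in the $\R$-direction by a small parameter $\epsilon > 0$ (a contactomorphism of $J^1\Lambda$, which rescales Reeb chord lengths but preserves their count and Conley--Zehnder indices) so that $\epsilon \cdot \Lambda' \subset V$, and then transporting into $U \subset Q$.

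Next I would classify the Reeb chords of $\Lambda' \# \Lambda$ into two types. Short chords, entirely contained in $U$, correspond under the Weinstein identification to Reeb chords of $\epsilon \cdot \Lambda'$ in $J^1\Lambda$, hence to Reeb chords of $\Lambda'$; these all have positive index by hypothesis. Long chords, for $\epsilon$ sufficiently small, are in bijection (with multiplicities governed by the sheet structure of $\Lambda'$ over $\Lambda$) with Reeb chords of $\Lambda$ in $Q$, each being a small $C^1$-perturbation of its underlying $\Lambda$-chord. A linearization argument along the chord shows that the Conley--Zehnder index of such a perturbation agrees with that of the original $\Lambda$-chord, which is positive by hypothesis on $\Lambda$. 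Reeb orbits of $Q$ are independent of the Legendrian and retain their positivity from $(Q,\Lambda)$ directly.

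The principal obstacle will be the analysis of long chords: on one hand a compactness statement ensuring that, as $\epsilon \to 0$, no long chord of $\Lambda' \# \Lambda$ escapes to infinity or limits onto a broken configuration not traced by a genuine $\Lambda$-chord; on the other hand the index comparison itself, which requires choosing compatible symplectic trivializations of the contact distribution along a long chord and along the corresponding $\Lambda$-chord so that the linearized Reeb flows can be compared and the CZ contributions add correctly. Both are standard in SFT-type stretching arguments, so the lemma should follow once these are set up carefully.
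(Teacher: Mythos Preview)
Your proposal is correct and follows essentially the same approach as the paper: both rescale $\Lambda'$ inside the jet-bundle neighborhood and observe that, in the limit, Reeb chords of the satellite either converge to chords of $\Lambda$ in $Q$ (your ``long'' chords) or collapse to points of $\Lambda$, in which case they came from chords of $\Lambda'$ in $J^1\Lambda$ (your ``short'' chords). The paper states this in a single sentence, while you have spelled out the Weinstein neighborhood setup, the index comparison, and the compactness input more carefully; but the underlying idea is identical.
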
 
\begin{proof}
By rescaling the jet bundle, one sees the Reeb chords of the satellite converge either
to converge to chords of $\Lambda$, or 
points of $\Lambda$, in which case they were chords from  $\Lambda' \subset J^1 \Lambda$. 
\end{proof} 

\begin{remark}
In the situation of Lemma \ref{recursion}, the disks counted to give monomials of $A_\rho$ are precisely the same disks which 
give the monomials in the equations defining the augmentation variety of the Legendrian DGA of $\partial L$, as a subvariety
of the coefficient space $\mathrm{Spec}\, \C[H_2(X, L)]$.  
So the $A_\rho$ are a `skein valued' quantization of the augmentation
variety.  (If non-disk terms appear in $A_\rho$, they appear with higher power of $z$.)  
If one wants a quantization in the ordinary sense, i.e. some ideal in the quantum torus associated to $H_1(\partial L)$, then just take the specialization
of the HOMFLYPT skein $Sk(\partial L)$ to the linking skein $Lk(\partial L)$.
\end{remark}

\begin{remark}
Let us record an argument why the augmentation variety is Lagrangian.  
If $X$ is exact and subcritical (e.g. $X = \R^6$ as in the case of interest in this article), it follows from 
\cite{GPS2} that the wrapped Fukaya category of $(X, \partial L)$ is equivalent to the category of modules over endomorphisms
of the linking disk $\delta$ to $\partial L$.
As noted in \cite{GPS2}, there's an
inclusion of sectors $T^*(\partial L \times \R) \to (X, \partial L)$ carrying the cotangent fiber to the linking disk, hence by \cite{GPS1}
providing a map $C_* \Omega L \to End_X(\delta)$, along which we may pull back augmentations.  
Per \cite{Ekholm-Lekili} this structure is quasi-isomorphic
to the inclusion of the coefficients $C_* \Omega L$ in the Legendrian DGA of $\partial L$ with coefficients in $C_* \Omega L$.
(In the case at hand, $L$ is a surface of positive genus so $C_* \Omega L = \Z[\pi_1(L)]$, augmentations of
which factor through $\Z[H_1(L)]$.)
Returning to the Fukaya categorical formulation, now use \cite{GPS3} to pass to microsheaves, and
conclude from \cite{Shende-Takeda} that the map $C_* \Omega L \to End_X(\delta)$ is relative Calabi-Yau hence 
by \cite{Brav-Dyckerhoff} that we get a Lagrangian morphism on moduli spaces of augmentations, in the sense of \cite{PTVV}. 
Along the locus where everything is a smooth scheme of the expected dimension, such a Lagrangian morphism is just
a Lagrangian in the usual sense.  
\end{remark}

\section{Morse flow trees}

For $f: M \to \R$ a function, Floer strips between the graph of 
$tdf$ and the zero section will in some sense converge to Morse trajectories as $t \to 0$ \cite{Floer-morse};
corresponding disks governing $A_\infty$ structures correspondingly limit to Morse flow trees \cite{Fukaya-Oh}.  
More generally still and most relevant to us
is Ekholm's work \cite{E} on the analogous questions for Legendrians $\Lambda \subset J^1 M$.   Here we briefly review some results
of this work.

\vspace{2mm}
Recall a fatgraph is a graph, possibly with loops and half-edges, equipped with a cyclic order of edges at each vertex. 
To such a graph is canonically associated an oriented surface with boundary and boundary punctures, obtained by thickening
the vertices to disks, the edges to strips, and attaching the strips to the boundary of the disks using the cyclic order. 
Half-edges introduce punctures. 
For a fatgraph $\mathfrak{f}$, we write $S_\mathfrak{f}$ for this surface.  We write $\mathfrak{f}^\circ := \mathfrak{f} 
\setminus \{\mbox{vertices and half-edges}\}$. 
 
\begin{definition}\cite[Def. 2.10. (b) and (c)]{E} 
Let $M$ be a manifold and $\mathfrak{f}$ a fatgraph.  A {\em test graph}  is 
a commutative diagram of continuous maps 
$$
\begin{tikzcd}
\partial S_\mathfrak{f} \ar{r}{u} \ar{d}{\gamma} & J^1 M \ar{d}{\pi} \\ 
\mathfrak{f} \ar{r}{\eta}  & M
\end{tikzcd}
$$
such that $\gamma: \gamma^{-1}(\mathfrak{f}^\circ) \to \mathfrak{f}^\circ$ is a 2:1 cover, and, for any vertex $v \in \mathfrak{f}$, one has $|\gamma^{-1}(v)| = \mathrm{degree}(v)$.
\end{definition}

Let $ev: J^1 M \to \R$ be the projection recording the `value of the function'. 
Note that at a half-edge of the fatgraph, the corresponding $S_\mathfrak{f}$ has a boundary puncture.  
We say the puncture is positive if the orientation of the ideal boundary is carried by $ev$ to the orientation
of $\R$, and negative if it is carried to the opposite orientation.

\begin{definition} \cite[Def. 2.10. (a)]{E} 
Let $\Lambda \subset J^1 M$ be a Legendrian, and let $\Lambda^{reg} \subset \Lambda$ be the locus
where the base projection $\pi: \Lambda \to M$ is \'etale.  
Suppose given a test graph satisfying: 
$$
\begin{tikzcd}
\gamma^{-1}(\mathfrak{f}^\circ) \ar{r}{u} \ar{d} & \Lambda^{reg} \ar{d}  \\ 
\partial S_\mathfrak{f} \ar{r}{u} \ar{d}{\gamma} & \Lambda \ar{d}{\pi} \ar{r}{ev}&   \R \\ 
\mathfrak{f} \ar{r}{\eta}  & M
\end{tikzcd}
$$
Consider $p \in \gamma^{-1}(\mathfrak{f}^\circ)$.  Then the natural involution 
$\iota:  \gamma^{-1}(\mathfrak{f}^\circ) \to  \gamma^{-1}(\mathfrak{f}^\circ)$ extends to some
$Nbd(u(p)) \subset \Lambda^{reg}$. On possibly some smaller neighborhood, the base projection
$\pi$ is an isomorphism.  Thus on some $Nbd(\pi(u(p))$ we may consider the function 
\begin{equation}
\label{function difference} ev \circ \pi^{-1} - ev \circ \iota \circ \pi^{-1} 
\end{equation}
Fix now a metric on $M$.  
We say the test graph is a {\em flow graph} if $\pi \circ u|_{\gamma^{-1}(\mathfrak{f}^\circ)}$ 
is, at every $p$, a gradient flow line for the corresponding function (note $\partial S_\mathfrak{f}$ is oriented).  
\end{definition} 

We write $\Pi: J^1 M \to T^*M$ for the Lagrangian projection. 

\begin{theorem} \cite{E} \label{morse flow tree theorem}
Assume $\dim M = 2$ and that the front projection $(\pi \times ev) (\Lambda) \subset M \times \R$ has only crossings, 
cusp edges, and swallowtails, and these singularities are in mutually general position.  
Fix a metric $g$ on $M$.  

Then there is an isotopy of Legendrians $\Lambda_t$, where $\Lambda_0 = \Lambda$ and $\Lambda_t$ 
is a small perturbation of $t \Lambda$, such that: 
\begin{enumerate}
\item Any sequence ($t \to 0$) of holomorphic disks $D_t$ with boundary on $\Pi(\Lambda_t)$, possibly with punctures, 
has a subsequence converging to a Morse flow tree for $\Lambda$.  
\item Assume all rigid flow trees for $\Lambda$ with one positive punctures are transversely cut out.  Then
for all sufficiently small $t$, there is a bijection between rigid Morse flow trees with one positive puncture, and 
rigid holomorphic disks $D_t$ with boundary on $\Pi(\Lambda_t)$.
\end{enumerate} 
Moreover, metrics $g$ ensuring transversality of all rigid flow trees with one positive puncture are open and dense
in the space of metrics. 
\end{theorem}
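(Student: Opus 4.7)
The plan is to follow the adiabatic degeneration program of Ekholm, which ties holomorphic disks on $\Pi(\Lambda_t)$ for small $t$ to Morse flow trees for $\Lambda$ via a two-scale analysis. After rescaling fiberwise in the cotangent direction by $1/t$, the Lagrangian $\Pi(\Lambda_t)$ becomes well approximated by $\Pi(\Lambda)$, and holomorphic curves in $T^*M$ bounding it are expected to concentrate, as $t \to 0$, on a one-dimensional skeleton in $M$, converging to maps from a fattened graph whose open edges are gradient trajectories of the local difference function \eqref{function difference} and whose vertices are governed by local models near $\pi$-crossings, cusp edges, and swallowtails of the front.

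For part (1), I would establish compactness in three stages. First, combine standard Gromov compactness with uniform energy bounds derived from the primitive of $p\,dq$ on $\Pi(\Lambda_t)$ and the smallness of $t$ to rule out fiberwise bubbling. Second, on the open locus where the base projection $\pi:\Lambda\to M$ is étale, apply Floer's adiabatic argument \cite{Floer-morse} and its $A_\infty$ refinement \cite{Fukaya-Oh} to show that the Cauchy--Riemann equation on thin strips reduces in the limit to the negative gradient equation for \eqref{function difference}, so any subsequential limit is a Morse trajectory on open arcs. Third, at each singular stratum of the front projection, rescale around the singular point to produce a local holomorphic model, and carry out matched asymptotic expansions to show that the limit extends continuously across the singular point in the manner prescribed by the test-graph condition $|\gamma^{-1}(v)| = \mathrm{degree}(v)$.

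For part (2), I would perform the inverse gluing construction: given a rigid, transversely cut out Morse flow tree, build a preglued approximate holomorphic disk on $S_{\mathfrak{f}}$ by interpolating local solutions of three types (strip models along interior edges, and the appropriate vertex models at cusp-edge, crossing, and swallowtail vertices). The transversality hypothesis on the flow tree is precisely what is needed so that the linearization of $\bar\partial$ at the preglued map is uniformly invertible in a weighted Sobolev setting as $t \to 0$; a Newton--Picard iteration then produces a unique nearby genuine holomorphic disk. Combined with the compactness of (1) and a uniqueness statement for the iteration, this yields the claimed bijection.

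The density of admissible metrics follows from a Sard--Smale argument on the universal moduli space of flow trees parameterized by metrics, using that the evaluation conditions at vertices cut out each tree as a Fredholm zero locus and that variation of the metric surjects onto the obstruction at a transversely cut out tree; openness is then immediate from the compactness in (1). The hardest part of the argument is the local analysis at swallowtails: there the Lagrangian projection $\Pi(\Lambda)$ has a non-generic self-intersection behavior, the relevant vertex of the test graph receives several incoming edges whose sheet labels must be matched to a specific local holomorphic model, and both the asymptotic expansion of degenerating disks and the gluing estimate must be worked out carefully in this model. This is precisely the technical core of Ekholm's original work \cite{E}.
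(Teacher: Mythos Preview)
The paper does not provide its own proof of this theorem; it is stated as a black-box citation of Ekholm's work \cite{E}. Your proposal is a reasonable high-level sketch of the adiabatic degeneration program carried out in \cite{E}, and in that sense is faithful to the actual source, but there is nothing in the present paper to compare it against. If your goal is to supply a proof where the paper gives none, be aware that what you have written is an outline rather than a proof: the genuine content lies in the uniform energy estimates, the precise local models at each front singularity, the weighted Sobolev setup and the uniform invertibility of the linearized operator through the degeneration, none of which can be filled in without substantial additional work. For the purposes of this paper, the correct move is simply to cite \cite{E}, as the authors do.
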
 

In addition to this result, Ekholm characterized rigidity of flow graphs in terms of an explicit finite list of allowable vertices.
For nice pictures, 
see \cite[Thm. 2.7]{EENS}.\footnote{Note that to match the fatgraph formulation above, we should view the bivalent
vertices of \cite[Thm. 2.7 (a), (b)]{EENS} rather as trivalent vertices with a half-edge attached to record the 
puncture, which is projected to the vertex under the map $\eta$.}

We will require, beyond the results explicitly stated in \cite{E}, the following two additional facts.   
The first concerns the $C^1$ limiting behavior of the boundary of the disks $D_t$ in (2).  
The proofs in \cite{E} in fact show $C^1$ convergence away from the vertices, 
and convergence $C^1$ close to some explicit model near the vertices.  This implies
in particular that if one has a vector field on $\Lambda$ which pairs positively with
the conormal cone to $u(\partial S_F)$, then the same will be true for the nearby $u(\partial D_t)$. 

The second concerns curves of higher genus.  The arguments of \cite{E} are local in nature, so 
also imply the analogue of (1) above, namely that a sequence of curves (with fixed genus) must 
have a subsequence converging to a flow graph.  One can show moreover that, if the curves
have one positive puncture, then the limiting graph must have the same genus as the curves.  
The basic point is that for genus to disappear into an edge or vertex, the corresponding `piece' 
of a rescaled limiting curve must be mapping by a multiple cover; hence the whole curve must be.
But this cannot happen if there is exactly one positive puncture.  In particular, when studying 
curves with one positive puncture, if all limiting flow graphs are rigid flow trees, then all one-positive-puncture 
curves for nearby $\Lambda_t$ are disks.  This fact was already used 
in the proofs of \cite[Lemma 4.16]{ENg}, and 
in the flow-tree versions of the proofs of \cite[Prop. 2.1, 2.1]{unknot}; the above argument was communicated to us by Ekholm.

\section{Chords, trees, and chains } 

Here we prove various geometric results needed to apply Lemma \ref{recursion} to $\Lambda_\Gamma$ and determine the resulting $A_\rho$. 
First let us recall the construction of $\Lambda_\Gamma$ from \cite{Treumann-Zaslow}.  
Recall $\Gamma$ is a trivalent graph in $S^2$.  Choose a non-negative function $f_\Gamma$ such that 
$f_\Gamma$ has a single maximum in every face, vanishes along the edges, and so that 
$$\Phi_\Gamma := Graph(f_\Gamma) \cup Graph(-f_\Gamma)$$
is the front projection of a smooth Legendrian, which we denote $\Lambda'_\Gamma \subset J^1 S^2$.  
(Requiring smoothness of $\Lambda'_\Gamma$ implies in particular that $\Phi_\Gamma$ will have the $D_4^-$ singularity of 
Arnol'd \cite{AGV-I} at the vertices of $\Gamma$.) 
Then $\Lambda_\Gamma$ is obtained by implanting $\Lambda'_\Gamma$ in a standard
neighborhood of the standard unknot in the standard contact $S^5$.  

Let $F \subset S^2$ be a face of $\Gamma$.  We write $F_+$ and $F_-$ for the upper and lower sheets
of the preimage of $F$ under the projection $\pi: \Lambda_\Gamma \to S^2$.  These subsets of $\Lambda_\Gamma$ 
meet only at the vertices, as depicted in Figure \ref{face preimages}.

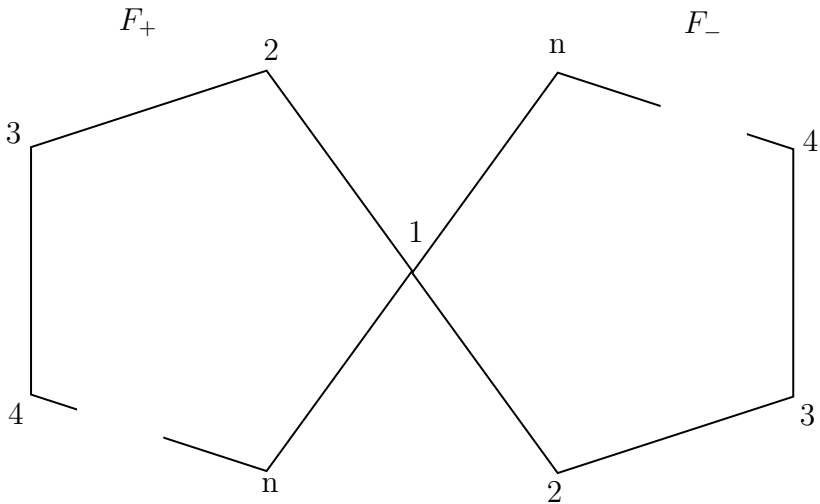
\begin{figure}[h]
\begin{center}
\tikzset{every picture/.style={line width=0.75pt}} 

\begin{tikzpicture}[x=0.75pt,y=0.75pt,yscale=-1,xscale=1]

\draw   (236.5,153.25) -- (163.08,254.3) -- (44.29,215.7) -- (44.29,90.8) -- (163.08,52.2) -- cycle ;
\draw  [color={rgb, 255:red, 255; green, 255; blue, 255 }  ,draw opacity=1 ][fill={rgb, 255:red, 255; green, 255; blue, 255 }  ,fill opacity=1 ] (68,215) -- (110.5,215) -- (110.5,257) -- (68,257) -- cycle ;
\draw   (236.5,154.25) -- (309.93,53.21) -- (428.72,91.83) -- (428.7,216.74) -- (309.9,255.31) -- cycle ;
\draw  [color={rgb, 255:red, 255; green, 255; blue, 255 }  ,draw opacity=1 ][fill={rgb, 255:red, 255; green, 255; blue, 255 }  ,fill opacity=1 ] (404.8,92.21) -- (362.3,92.21) -- (362.31,50.21) -- (404.81,50.21) -- cycle ;

\draw (233,127) node [anchor=north west][inner sep=0.75pt]   [align=left] {1};
\draw (160,35) node [anchor=north west][inner sep=0.75pt]   [align=left] {2};
\draw (30,77) node [anchor=north west][inner sep=0.75pt]   [align=left] {3};
\draw (31.29,218.7) node [anchor=north west][inner sep=0.75pt]   [align=left] {4};
\draw (159.08,256.3) node [anchor=north west][inner sep=0.75pt]   [align=left] {n};
\draw (303,258) node [anchor=north west][inner sep=0.75pt]   [align=left] {2};
\draw (430.49,219.42) node [anchor=north west][inner sep=0.75pt]   [align=left] {3};
\draw (432,81) node [anchor=north west][inner sep=0.75pt]   [align=left] {4};
\draw (304,35) node [anchor=north west][inner sep=0.75pt]   [align=left] {n};
\draw (87,19) node [anchor=north west][inner sep=0.75pt]   [align=left] {$\displaystyle F_{+}$};
\draw (372,21) node [anchor=north west][inner sep=0.75pt]   [align=left] {$\displaystyle F_{-}$};
\end{tikzpicture}
\end{center}
\caption{\label{face preimages} Preimages $F_+, F_- \subset \Lambda_\Gamma$ of a face $F\subset S^2$ of $\Gamma$. 
Vertices with the same number are identified.}
\end{figure}

\begin{lemma} \label{positive}
For any trivalent graph $\Gamma$, the pair $(S^5, \Lambda_\Gamma)$ is Reeb positive.  The index one Reeb chords
are in bijection with the faces of $\Gamma$.  We may ensure that any particular one of these chords is the lowest
action chord. 
\end{lemma}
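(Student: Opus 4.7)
The plan is to reduce via the satellite construction: since $\Lambda_\Gamma$ is implanted in a standard neighborhood of the Legendrian unknot $S^2 \subset S^5$ (contactomorphic to $J^1 S^2$ by the Legendrian neighborhood theorem), Lemma \ref{satellite positivity} reduces Reeb positivity of $(S^5, \Lambda_\Gamma)$ to (i) Reeb positivity of $\Lambda'_\Gamma \subset J^1 S^2$ with index-one chords in bijection with the faces of $\Gamma$, and (ii) Reeb positivity of the ambient unknot. I first enumerate the chords of $\Lambda'_\Gamma$ in $J^1 S^2$: the Reeb vector field there is $\partial_t$, and the two sheets are graphs of $\pm df_\Gamma$, so a Reeb chord occurs exactly at an interior critical point of $f_\Gamma$. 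By construction these are the face-maxima $p_F$, giving one chord per face with positive action $2 f_\Gamma(p_F)$.

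To compute indices I would invoke the standard grading formula for a Reeb chord in $J^1 M$ at a Morse critical point of the height-difference function (for which flow-tree rigidity, as in Theorem \ref{morse flow tree theorem}, provides a convenient check): for a maximum with $\dim M = 2$ this yields index $1$, and since the face-chords exhaust the chord set of $\Lambda'_\Gamma$ in $J^1 S^2$, the Legendrian is Reeb-positive with exactly one index-one chord per face. Reeb positivity of the ambient $S^2 \subset S^5$ follows from a Morse--Bott analysis of its $\R P^2$-family of period-$\pi$ Hopf chords, together with the fact that each Hopf wrap contributes CZ index at least $4$. Finally, the action $2 f_\Gamma(p_F)$ of the face-$F$ chord can be made strictly smaller than any other by a suitable rescaling of $f_\Gamma$, so any particular chord can be arranged to have lowest action.

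I expect the main obstacle to be controlling the Reeb chords of $\Lambda_\Gamma$ in $S^5$ that leave the standard neighborhood: such a chord picks up Conley--Zehnder index from the Hopf flow, so its grading should remain positive, but this requires careful index accounting across the satellite embedding. A subsidiary point is matching the LCH grading in $J^1 S^2$ with the ambient index convention used in Lemma \ref{satellite positivity}, though since both reduce to CZ in the relevant capping paths, this should be routine.
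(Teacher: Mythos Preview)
Your approach via Lemma~\ref{satellite positivity} is exactly the paper's: reduce to positivity of $\Lambda'_\Gamma \subset J^1 S^2$ (chords at the face-maxima, index~$1$) together with positivity of the ambient unknot. The one substantive difference is your treatment of the unknot: the paper simply asserts that the standard Legendrian $S^2 \subset S^5$ has a \emph{unique} Reeb chord, of index~$2$---implicitly working with a contact form for which this is immediate (e.g.\ via $S^5 \setminus \{\mathrm{pt}\} \cong \R^5 = J^1\R^2$, the model used later in Construction~\ref{4 chain})---rather than your proposed Morse--Bott analysis of the round Hopf form. The paper's route is shorter and entirely sidesteps the cross-satellite index bookkeeping you flag as your main concern. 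You also explicitly address the lowest-action claim by rescaling $f_\Gamma$, which the paper's written proof leaves implicit.
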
 
\begin{proof} 
The standard unknot $S^2 \subset S^5$ has a unique Reeb chord, which is of index 2.  For the satellite $\Lambda_\Gamma$, 
using Lemma \ref{satellite positivity} it suffices to check in $J^1 S^2$, where the Reeb chords correspond to the maxima of the function $f_\Gamma$
and are readily seen to have index 1. 
\end{proof}

\begin{proposition} \label{flow tree count}  
Fix a trivalent graph $\Gamma \subset S^2$, face $F$ of $\Gamma$, 
and corresponding index one Reeb chord $\rho_F$ of $\Lambda_\Gamma$. 
Then (for an appropriate complex structure) the 
holomorphic curves in $(\R \times S^5, \R \times \Lambda_\Gamma)$ with 
one positive puncture along $\rho_F$ are all transversely cut out disks,  in bijective correspondence with vertices $v$ of $F$. 

We may arrange that, at the Reeb chord $\rho_F$, tangents to disk boundaries lie in some half-plane.  
The boundaries themselves are isotopic to the paths depicted in Figure \ref{boundaries}: 
\end{proposition}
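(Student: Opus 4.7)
The plan is to apply Ekholm's Morse flow tree theorem (Theorem~\ref{morse flow tree theorem}) to a rescaling $\Lambda_t$ of the $J^1 S^2$-piece $\Lambda'_\Gamma$. For all sufficiently small $t$ this reduces the problem to a finite combinatorial count of rigid Morse flow trees with a single positive puncture at the face maximum $p_F$ of $f_\Gamma$. The genus observation at the end of Section~4 (a single positive puncture forbids any limiting flow graph from acquiring positive genus) ensures that only disks appear, and Reeb positivity from Lemma~\ref{positive}, together with the fact that we may take $\rho_F$ of minimal action, prevents unwanted multi-level SFT breakings. So it suffices to enumerate rigid flow trees for $\Lambda'_\Gamma$ with a positive puncture at $p_F$, check transversality so that the bijection in Theorem~\ref{morse flow tree theorem} applies, and then read off the boundary paths.

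Next I would classify the candidate trees. The front $\Phi_\Gamma = \mathrm{Graph}(\pm f_\Gamma)$ has only cusp edges (along edges of $\Gamma$) and $D_4^-$ points (at vertices of $\Gamma$), with no swallowtails and no transverse double points away from the $D_4^-$ locus. Ekholm's list of admissible flow-tree vertices in dimension two then restricts us to: the positive puncture itself, cusp-edge end vertices, partial cusp vertices that let a flow trajectory switch sheets and continue across an edge, and $D_4^-$ vertices. On the face $F$ the relevant sheet-difference function is $\pm 2f_\Gamma$, so the outgoing edge from the positive puncture at $p_F$ is simply a gradient trajectory of $\pm 2f_\Gamma$. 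The one-parameter family of such trajectories is parametrized by the initial direction $\theta \in S^1$ at the critical point; a tree that ends generically on a cusp edge still has a free parameter along that edge and is not rigid, while a tree whose trajectory hits a $D_4^-$ vertex $v$ of $F$ imposes one extra codimension-one condition and so is rigid.

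The counting step is then a local analysis of $f_\Gamma$ near each vertex $v$ of $F$. Locally, $v$ sits at the corner of the wedge-shaped region $F$ between two edges of $\Gamma$ along which $f_\Gamma$ vanishes, and the $D_4^-$ normal form for $\Phi_\Gamma$ picks out a unique one-dimensional stable manifold of $v$ inside $F$. For a generic choice of $f_\Gamma$ (hence a generic metric) a transverse intersection count between the $S^1$-family of outgoing trajectories from $p_F$ and this one-dimensional stable manifold contributes exactly one flow tree per vertex of $F$, giving the asserted bijection with the vertices $v_1,\dots,v_n$. More elaborate trees obtained by chaining partial cusp vertices into neighboring faces or introducing extra branchings at further $D_4^-$ points would require either an additional positive puncture or overdetermine the dimension count, and so should be excluded; checking all such possibilities against Ekholm's vertex classification is the step I expect to be the main technical obstacle. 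Finally, the disk limiting to each rigid tree has boundary that traces the flow trajectory in $F_+$ from the Reeb chord out to $v$, crosses between sheets at the $D_4^-$ point $v$, and returns to the Reeb chord along the same trajectory lifted to $F_-$; matching this against the vertex-labelled identification in Figure~\ref{face preimages} yields the paths of Figure~\ref{boundaries}. The half-plane condition on the tangents at $\rho_F$ then follows from the $C^1$ convergence statement at the end of Section~4 applied to the two branches of the tree leaving $p_F$ in a common direction.
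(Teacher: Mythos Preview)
Your proposal has a genuine gap at the very first step: Theorem~\ref{morse flow tree theorem} requires that the front projection have only crossings, cusp edges, and swallowtails in mutually general position. The front $\Phi_\Gamma$ of $\Lambda'_\Gamma$ has $D_4^-$ singularities at every vertex of $\Gamma$, which are \emph{not} in this list, so you cannot invoke the theorem as stated. Correspondingly, Ekholm's classification of admissible rigid flow-tree vertices (the list you appeal to from \cite{EENS}) is established only for generic fronts and contains no ``$D_4^-$ vertex''; you are inventing a vertex type whose contribution to the dimension formula and whose local disk model have not been analyzed. The paper confronts exactly this issue and remarks that if a $D_4^-$ version of the theorem were available one could conclude immediately; since it is not, the actual proof perturbs each $D_4^-$ point into a generic configuration of three swallowtails and then carries out a careful local case analysis (excluding $Y_0$ and $Y_1$ trivalent vertices stage by stage along the resulting tree) to verify that the expected bijection with vertices of $F$ survives the perturbation.

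A secondary issue: the half-plane condition on tangents at $\rho_F$ is not a consequence of $C^1$ convergence for a single tree, but a statement about the $n$ distinct trees simultaneously. It is obtained by \emph{choosing} the metric so that all gradient trajectories of $f_\Gamma$ from the face maximum to the vertices leave the maximum within a common half-plane; you should say this explicitly rather than derive it from convergence.
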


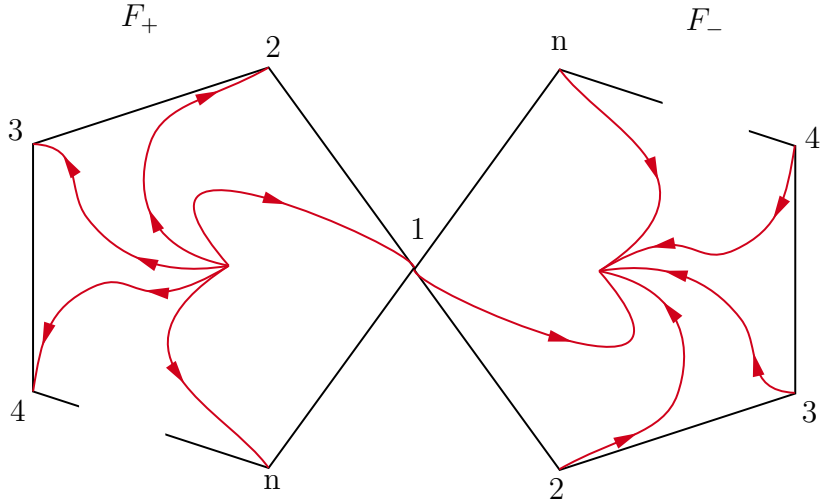
\begin{figure}[h]

\begin{center}
\tikzset{every picture/.style={line width=0.75pt}} 

\begin{tikzpicture}[x=0.75pt,y=0.75pt,yscale=-1,xscale=1]

\draw   (236.5,153.25) -- (163.08,254.3) -- (44.29,215.7) -- (44.29,90.8) -- (163.08,52.2) -- cycle ;
\draw [color={rgb, 255:red, 208; green, 2; blue, 27 }  ,draw opacity=1 ]   (143,152.25) .. controls (72.5,71) and (236.5,143.25) .. (236.5,153.25) ;
\draw [shift={(172.18,121.26)}, rotate = 197.5] [fill={rgb, 255:red, 208; green, 2; blue, 27 }  ,fill opacity=1 ][line width=0.08]  [draw opacity=0] (12,-3) -- (0,0) -- (12,3) -- cycle    ;
\draw [color={rgb, 255:red, 208; green, 2; blue, 27 }  ,draw opacity=1 ]   (163.08,52.2) .. controls (135.5,68) and (110.5,70) .. (103.5,90) .. controls (96.5,110) and (96.5,143) .. (143,152.25) ;
\draw [shift={(138.27,64.02)}, rotate = 157.17] [fill={rgb, 255:red, 208; green, 2; blue, 27 }  ,fill opacity=1 ][line width=0.08]  [draw opacity=0] (12,-3) -- (0,0) -- (12,3) -- cycle    ;
\draw [shift={(102.63,123.72)}, rotate = 65.32] [fill={rgb, 255:red, 208; green, 2; blue, 27 }  ,fill opacity=1 ][line width=0.08]  [draw opacity=0] (12,-3) -- (0,0) -- (12,3) -- cycle    ;
\draw [color={rgb, 255:red, 208; green, 2; blue, 27 }  ,draw opacity=1 ]   (44.29,90.8) .. controls (70.5,92) and (60.73,113.93) .. (71.5,128) .. controls (82.27,142.07) and (102.5,159) .. (143,152.25) ;
\draw [shift={(59.39,96.38)}, rotate = 57.7] [fill={rgb, 255:red, 208; green, 2; blue, 27 }  ,fill opacity=1 ][line width=0.08]  [draw opacity=0] (12,-3) -- (0,0) -- (12,3) -- cycle    ;
\draw [shift={(95.42,147.62)}, rotate = 22.19] [fill={rgb, 255:red, 208; green, 2; blue, 27 }  ,fill opacity=1 ][line width=0.08]  [draw opacity=0] (12,-3) -- (0,0) -- (12,3) -- cycle    ;
\draw [color={rgb, 255:red, 208; green, 2; blue, 27 }  ,draw opacity=1 ]   (44.29,215.7) .. controls (48.56,189.22) and (51.79,173.56) .. (74.5,163) .. controls (97.21,152.44) and (95.5,181) .. (143,152.25) ;
\draw [shift={(48.92,192.81)}, rotate = 288.56] [fill={rgb, 255:red, 208; green, 2; blue, 27 }  ,fill opacity=1 ][line width=0.08]  [draw opacity=0] (12,-3) -- (0,0) -- (12,3) -- cycle    ;
\draw [shift={(100.73,164.76)}, rotate = 7.08] [fill={rgb, 255:red, 208; green, 2; blue, 27 }  ,fill opacity=1 ][line width=0.08]  [draw opacity=0] (12,-3) -- (0,0) -- (12,3) -- cycle    ;
\draw [color={rgb, 255:red, 208; green, 2; blue, 27 }  ,draw opacity=1 ]   (143,152.25) .. controls (72.5,198) and (142.5,225) .. (163.08,254.3) ;
\draw [shift={(120.66,211.85)}, rotate = 235.13] [fill={rgb, 255:red, 208; green, 2; blue, 27 }  ,fill opacity=1 ][line width=0.08]  [draw opacity=0] (12,-3) -- (0,0) -- (12,3) -- cycle    ;
\draw  [color={rgb, 255:red, 255; green, 255; blue, 255 }  ,draw opacity=1 ][fill={rgb, 255:red, 255; green, 255; blue, 255 }  ,fill opacity=1 ] (68,215) -- (110.5,215) -- (110.5,257) -- (68,257) -- cycle ;
\draw   (236.5,154.25) -- (309.93,53.21) -- (428.72,91.83) -- (428.7,216.74) -- (309.9,255.31) -- cycle ;
\draw [color={rgb, 255:red, 208; green, 2; blue, 27 }  ,draw opacity=1 ]   (329.79,154.95) .. controls (400.28,236.21) and (236.5,164.25) .. (236.5,154.25) ;
\draw [shift={(316.18,190.55)}, rotate = 194.19] [fill={rgb, 255:red, 208; green, 2; blue, 27 }  ,fill opacity=1 ][line width=0.08]  [draw opacity=0] (12,-3) -- (0,0) -- (12,3) -- cycle    ;
\draw [color={rgb, 255:red, 208; green, 2; blue, 27 }  ,draw opacity=1 ]   (309.69,255) .. controls (337.28,239.2) and (362.28,237.21) .. (369.28,217.21) .. controls (376.28,197.21) and (376.29,164.21) .. (329.79,154.95) ;
\draw [shift={(349.08,236.78)}, rotate = 155.13] [fill={rgb, 255:red, 208; green, 2; blue, 27 }  ,fill opacity=1 ][line width=0.08]  [draw opacity=0] (12,-3) -- (0,0) -- (12,3) -- cycle    ;
\draw [shift={(361.75,170.24)}, rotate = 49.93] [fill={rgb, 255:red, 208; green, 2; blue, 27 }  ,fill opacity=1 ][line width=0.08]  [draw opacity=0] (12,-3) -- (0,0) -- (12,3) -- cycle    ;
\draw [color={rgb, 255:red, 208; green, 2; blue, 27 }  ,draw opacity=1 ]   (428.49,216.42) .. controls (402.28,215.21) and (412.06,193.28) .. (401.29,179.21) .. controls (390.52,165.14) and (370.29,148.21) .. (329.79,154.95) ;
\draw [shift={(407.57,196.13)}, rotate = 77.08] [fill={rgb, 255:red, 208; green, 2; blue, 27 }  ,fill opacity=1 ][line width=0.08]  [draw opacity=0] (12,-3) -- (0,0) -- (12,3) -- cycle    ;
\draw [shift={(362.37,154.8)}, rotate = 13.37] [fill={rgb, 255:red, 208; green, 2; blue, 27 }  ,fill opacity=1 ][line width=0.08]  [draw opacity=0] (12,-3) -- (0,0) -- (12,3) -- cycle    ;
\draw [color={rgb, 255:red, 208; green, 2; blue, 27 }  ,draw opacity=1 ]   (428.51,91.51) .. controls (424.24,118) and (421,133.65) .. (398.29,144.21) .. controls (375.58,154.77) and (377.3,126.21) .. (329.79,154.95) ;
\draw [shift={(417.51,128.73)}, rotate = 299.64] [fill={rgb, 255:red, 208; green, 2; blue, 27 }  ,fill opacity=1 ][line width=0.08]  [draw opacity=0] (12,-3) -- (0,0) -- (12,3) -- cycle    ;
\draw [shift={(356.08,142.93)}, rotate = 349.79] [fill={rgb, 255:red, 208; green, 2; blue, 27 }  ,fill opacity=1 ][line width=0.08]  [draw opacity=0] (12,-3) -- (0,0) -- (12,3) -- cycle    ;
\draw [color={rgb, 255:red, 208; green, 2; blue, 27 }  ,draw opacity=1 ]   (329.79,154.95) .. controls (400.3,109.21) and (330.3,82.2) .. (309.73,52.9) ;
\draw [shift={(359.59,109.25)}, rotate = 248.94] [fill={rgb, 255:red, 208; green, 2; blue, 27 }  ,fill opacity=1 ][line width=0.08]  [draw opacity=0] (12,-3) -- (0,0) -- (12,3) -- cycle    ;
\draw  [color={rgb, 255:red, 255; green, 255; blue, 255 }  ,draw opacity=1 ][fill={rgb, 255:red, 255; green, 255; blue, 255 }  ,fill opacity=1 ] (404.8,92.21) -- (362.3,92.21) -- (362.31,50.21) -- (404.81,50.21) -- cycle ;

\draw (233,127) node [anchor=north west][inner sep=0.75pt]   [align=left] {1};
\draw (160,35) node [anchor=north west][inner sep=0.75pt]   [align=left] {2};
\draw (30,77) node [anchor=north west][inner sep=0.75pt]   [align=left] {3};
\draw (31.29,218.7) node [anchor=north west][inner sep=0.75pt]   [align=left] {4};
\draw (159.08,256.3) node [anchor=north west][inner sep=0.75pt]   [align=left] {n};
\draw (303,258) node [anchor=north west][inner sep=0.75pt]   [align=left] {2};
\draw (430.49,219.42) node [anchor=north west][inner sep=0.75pt]   [align=left] {3};
\draw (432,81) node [anchor=north west][inner sep=0.75pt]   [align=left] {4};
\draw (304,35) node [anchor=north west][inner sep=0.75pt]   [align=left] {n};
\draw (87,19) node [anchor=north west][inner sep=0.75pt]   [align=left] {$\displaystyle F_{+}$};
\draw (372,21) node [anchor=north west][inner sep=0.75pt]   [align=left] {$\displaystyle F_{-}$};
\end{tikzpicture}
\end{center}
\caption{\label{boundaries} Boundaries of disks.} 
\end{figure}

\begin{figure}[h]
    \begin{picture}(150,135)
    \put(-100,-10){\includegraphics[width=6cm, trim=0cm 2cm 8cm 0cm, clip]{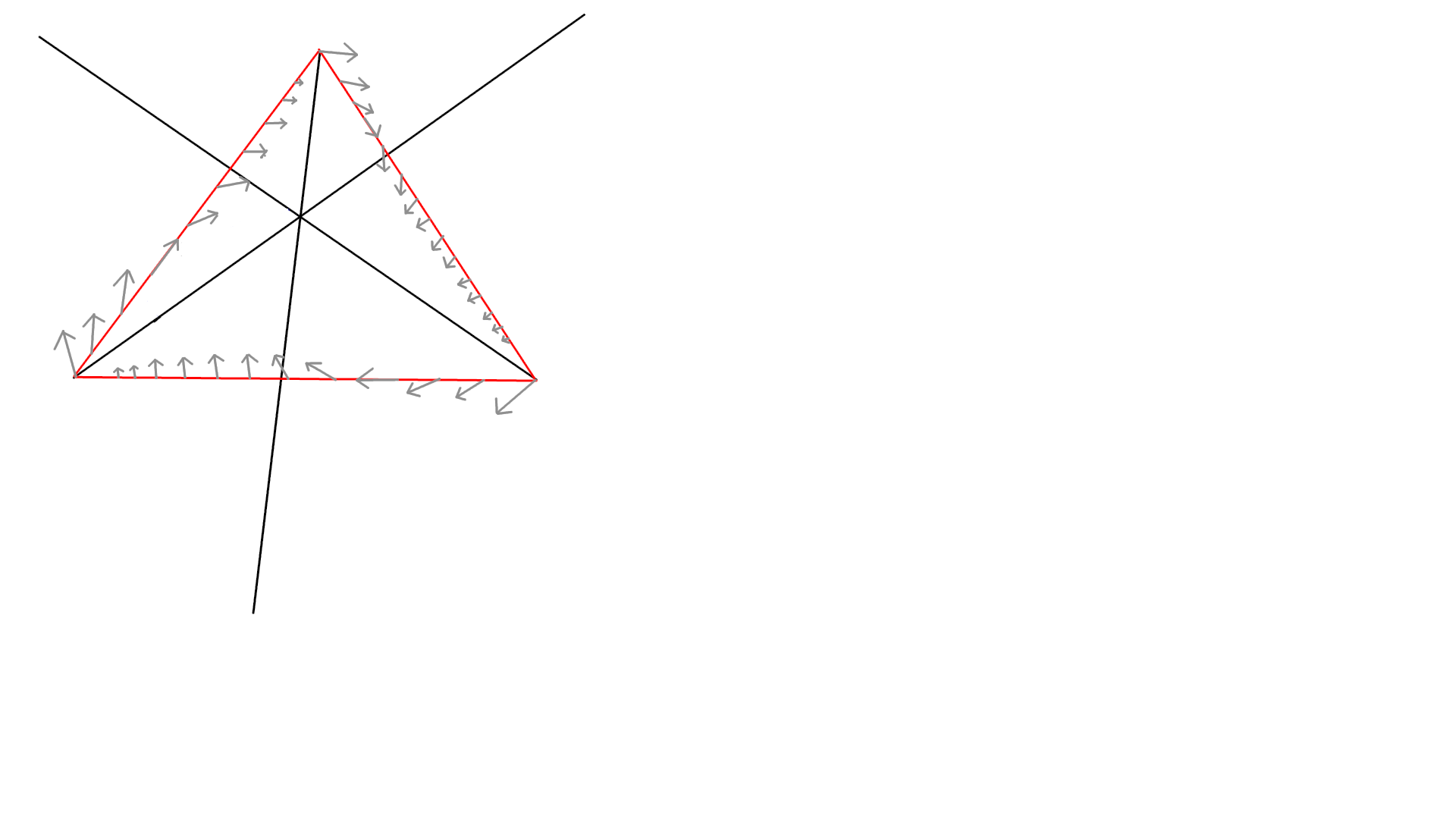}}
    \put(50,0){\includegraphics[width=6.5cm, trim=0cm 2.5cm 8cm 0cm, clip]{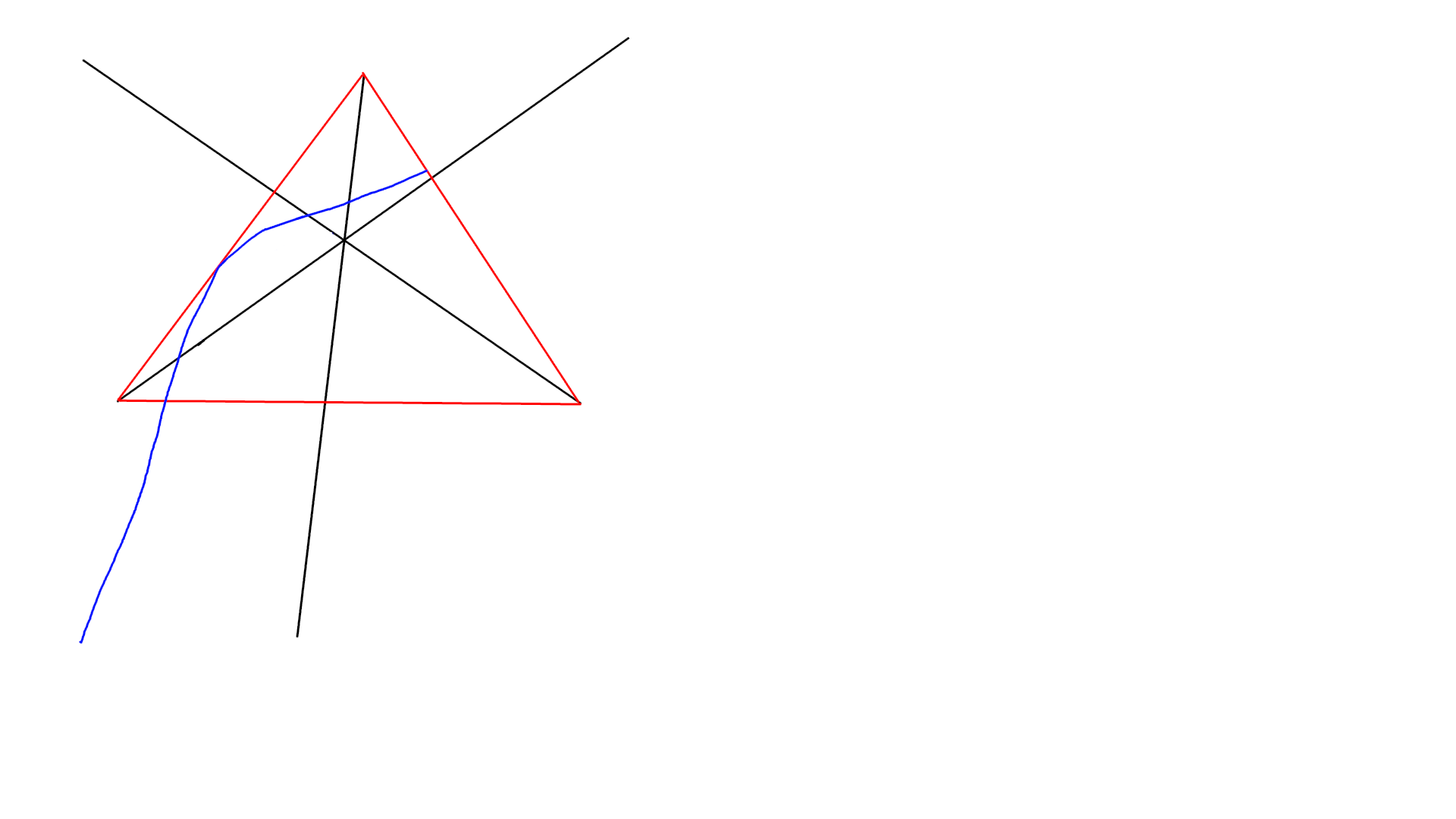}}
    \end{picture}
    \caption{On the left is the locus of singularities of a generic perturbation of the $D_4^-$ vertex to the base of the jet bundle. 
    The red lines indicate cusp-edges and where they meet there is a swallowtail singularity.  Within the red triangle, 
    the projection to the base is generically 4:1.  We locally number the sheets $(1)$ to $(4)$ from highest to lowest
    with respect to `function value' direction; note this numbering changes when crossing the black lines. 
    The grey arrows indicate the 
    gradient of the function difference on the sheets $(2)$ and $(4)$. 
    On the right, we depict the flow tree whose existence can be inferred immediately. 
    Its only vertices are a positive puncture at the Reeb chord of the face, a switch and a cusp-end.}
    \label{Generic-Perturbation}
\end{figure}

\begin{figure}[h]
    \begin{picture}(150,135)
    \put(-100,0){\includegraphics[width=6cm, trim=0cm 2cm 8cm 0cm, clip]{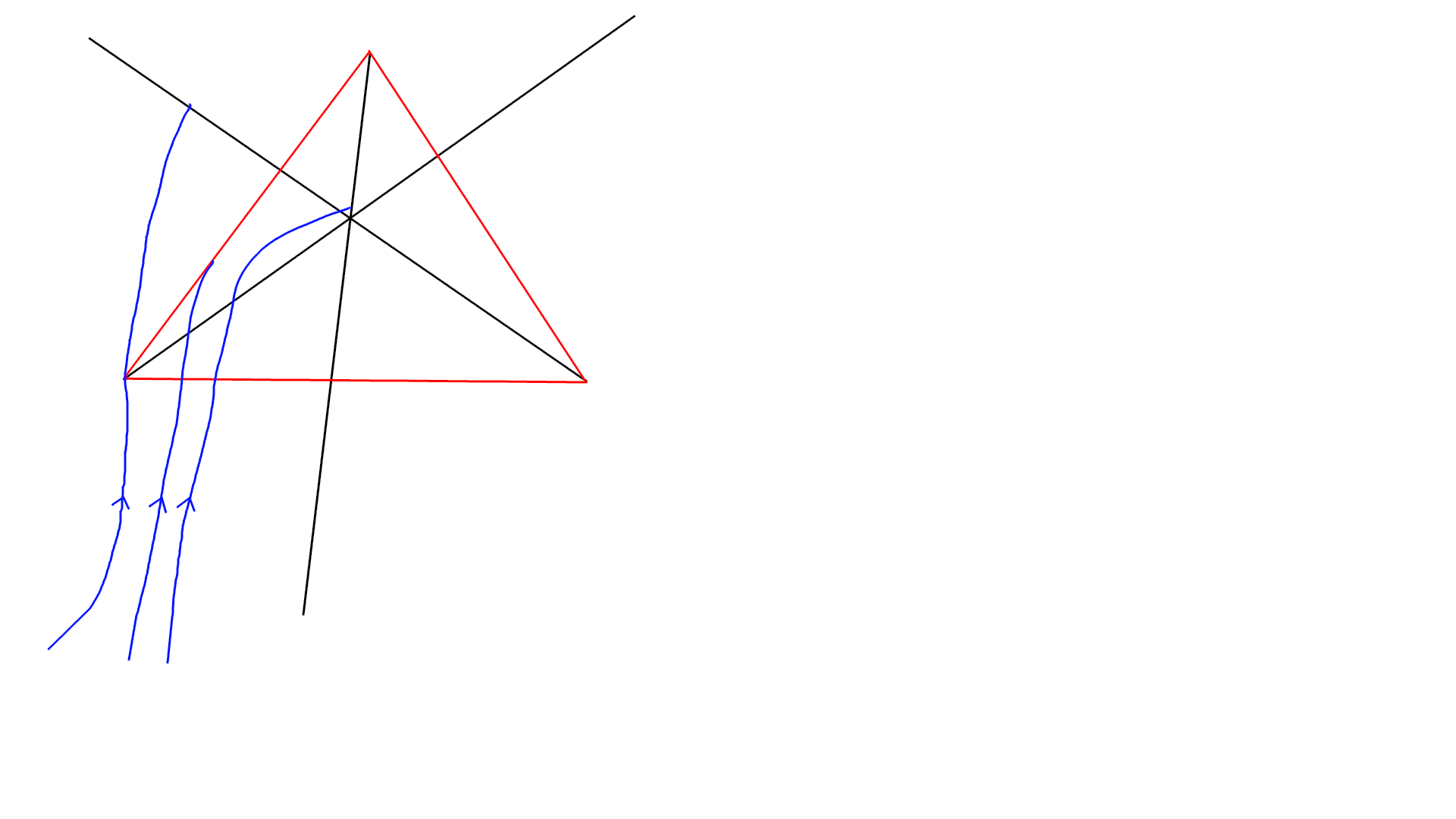}}
    \put(50,10){\includegraphics[width=6.5cm, trim=0cm 2.5cm 8cm 0cm, clip]{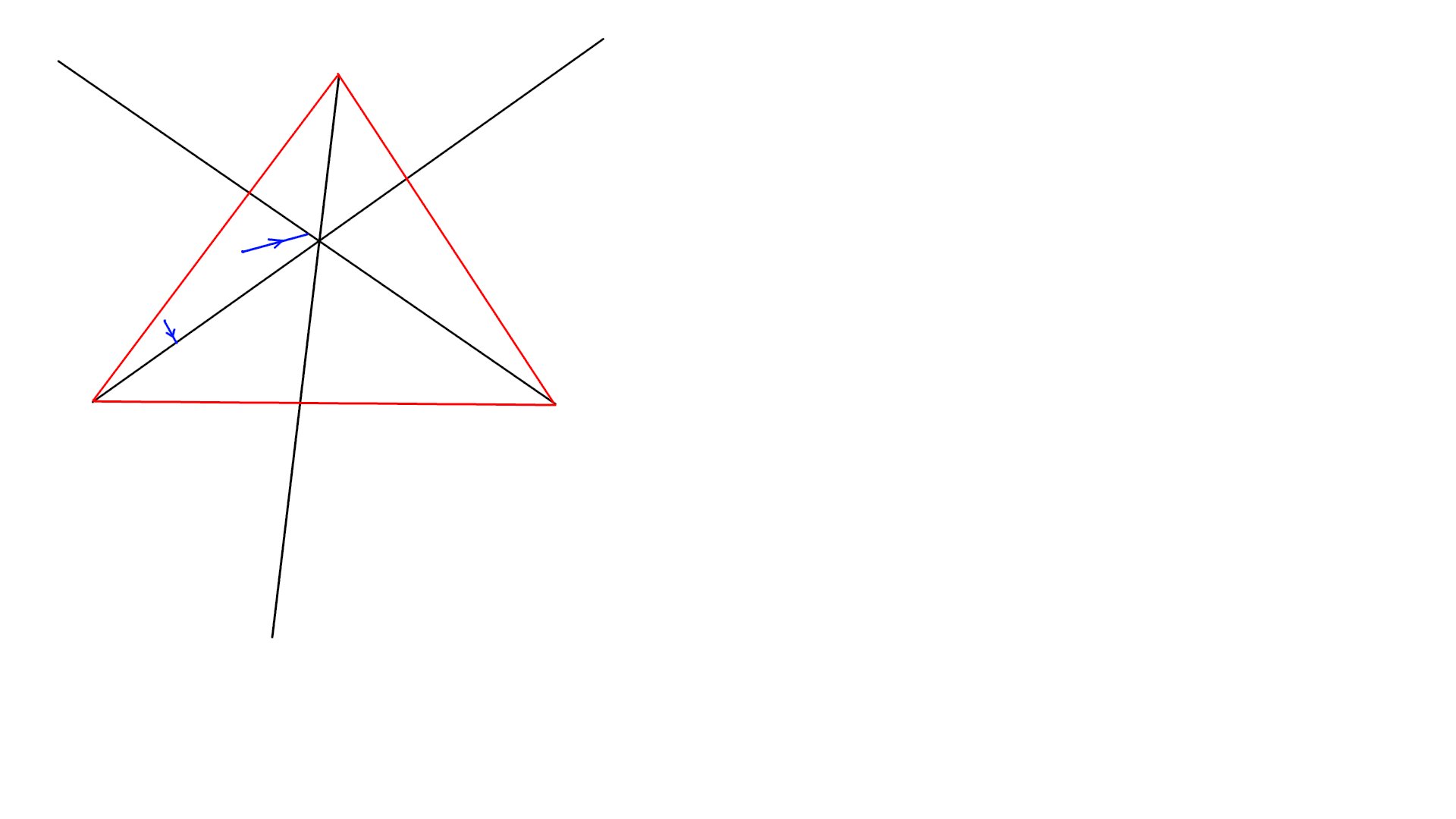}}
    \put(-100,-10){$(a)$}
    \put(-80,-10){$(b)$}
    \put(-60,-10){$(c)$}
    \put(75,80){$(d)$}
    \put(90,100){$(e)$}
    \end{picture}
    \vspace{4mm}
    \caption{Some possible flow lines close to the generic graph. Flow $(a)$ is the flow line which passes the swallowtail point, $(b)$ is the unique flow line which limits to the order $1$ tangency, $(c)$ is another flow line which necessarily misses the cusp edge, 
    $(d)$ is a flow carried by sheets $(2)$ and $(3)$, and $(e)$ is carried by sheets $(3)$ and $(4)$.} 
    \label{Possible trajectories}
\end{figure}

\begin{figure}[h]
    \begin{picture}(-100,125)
    \put(-200,0){\includegraphics[width=11cm, trim=2cm 1.5cm 1cm 2cm, clip]{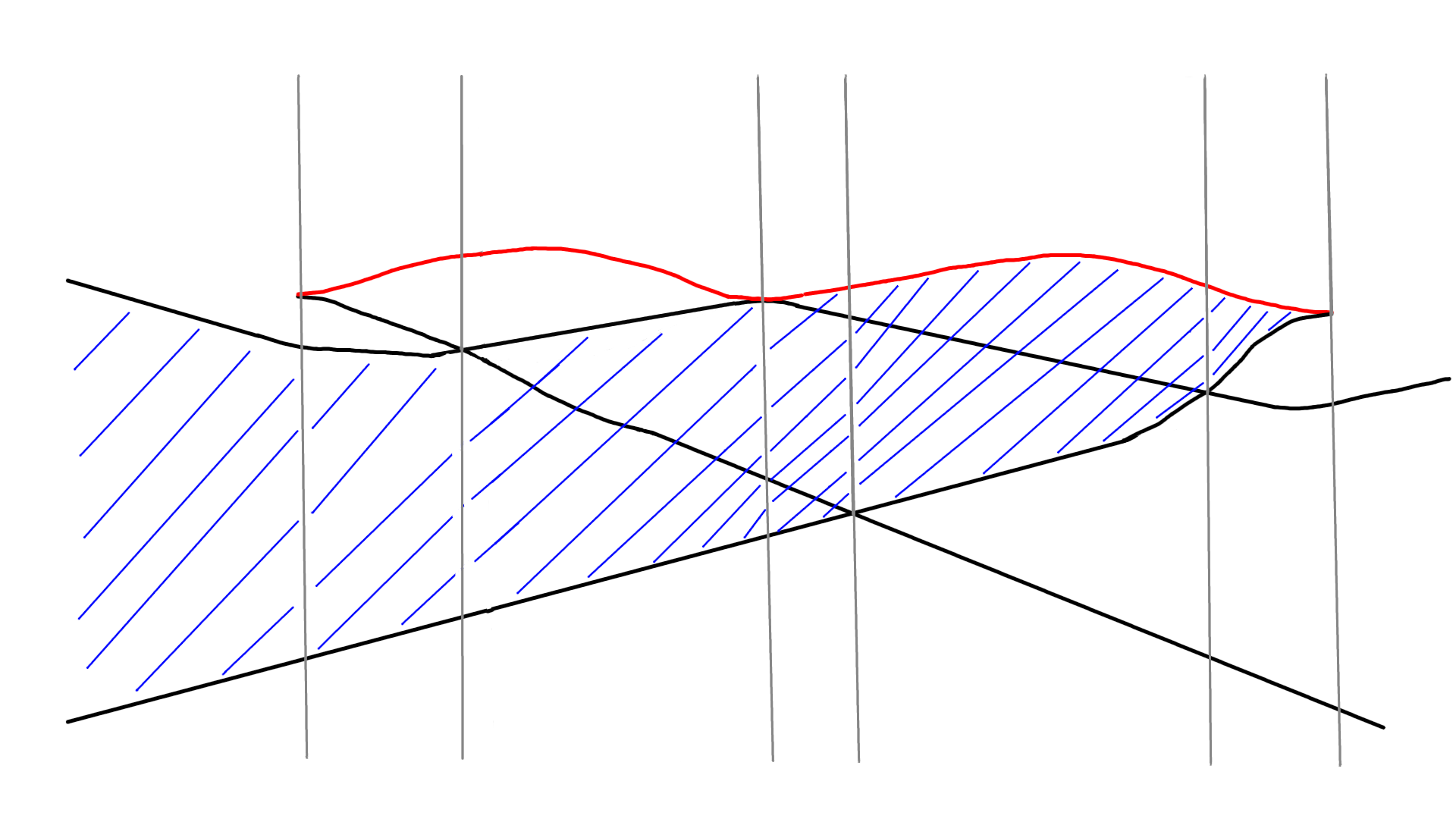}}
    \put(-200,120){$(i)$}
    \put(-165,120){$(ii)$}
    \put(-120,120){$(iii)$}
    \put(-47.5,120){$(iv)$}
    \put(-10,120){$(v)$}
    \put(80,120){$(vi)$}
    \end{picture}
    \caption{A cut of the front projection along the flow tree from Figure \ref{Generic-Perturbation}. The blue shaded area indicates the top and lower levels of the flow tree. Each of the grey lines indicates an intersection of the flow tree with one of the arcs in Figure \ref{Generic-Perturbation}. } 
    \label{Flow-tree-slice}
\end{figure}

\begin{figure}[h]
    \begin{picture}(-100,125)
    \put(-150,0){\includegraphics[width=6cm, trim=0cm 2cm 8cm 0cm, clip]{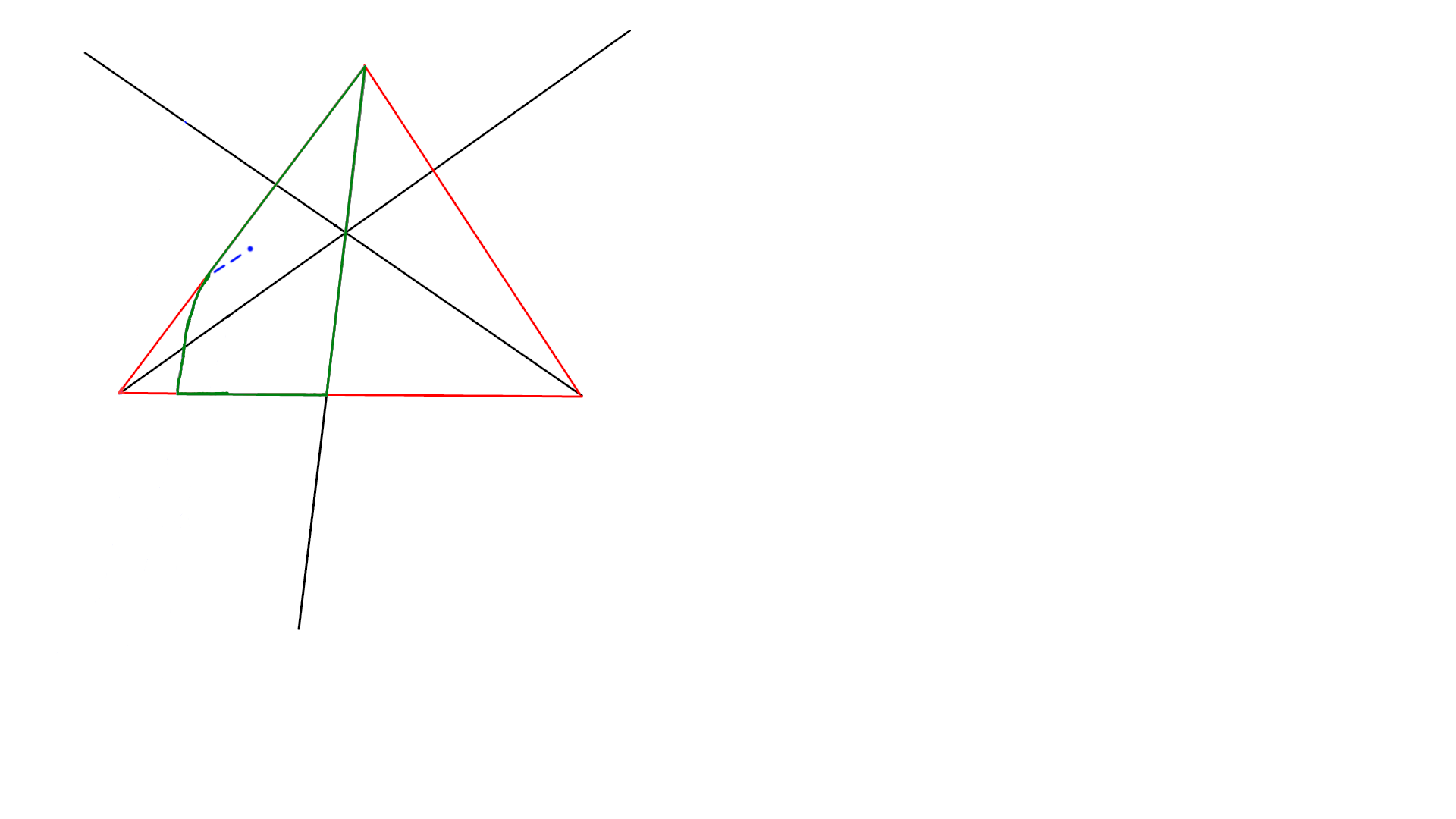}}
    \end{picture}
    \caption{A breaking of the flow tree in stage $(iv)$ which is carried by sheets $(2)$ and $(4)$ is locked into the region bounded by green arcs. The dashed blue line indicates the part of the previous flow tree while the blue point indicates the
    putative trivalent vertex.} 
    \label{Special-Case}
\end{figure}

\begin{proof}
As both $\Lambda_\Gamma$ and the index one Reeb chord in question sit inside the $J^1 S^2 \subset S^5$, 
all holomorphic curves whose only punctures lie on said chords 
must lie in the symplectization of $J^1 S^2$.

Let us consider Morse flow graphs for $\Lambda_\Gamma$. 
Fix a metric on $S^2$.  Recall that there is an index one Reeb chord
associated to each face; it is obvious that the corresponding flow graphs are just the simple paths from the 
corresponding maximum of $f_\Gamma$ 
to the vertices.  The condition on the tangents to paths is easily arranged by appropriate choice of metric. 

If we knew a version of Theorem \ref{morse flow tree theorem} which allowed for fronts with 
$D_4^-$ singularities, we could conclude immediately.  Instead, we will perturb $\Lambda_\Gamma$
by Legendrian isotopy to some $\widetilde{\Lambda}_\Gamma$, so that
in the front projection the $D_4^-$ singularities become a configuration of swallowtails,
and check
that the same flow graph configurations appear.\footnote{Similar arguments appear in e.g. \cite{EENS, Rizell}. The specific result we need here was known to Casals, and announced in \cite{Casals-Murphy}, with a proof sketched in \cite[Minute 53]{Casals-lecture}. Casals communicated a similar sketch to us directly, and we also benefitted from discussions with Ekholm around this question.}

Let us describe how to construct $\widetilde{\Lambda}_\Gamma$.  We describe it in terms of the front, 
which will be identical to that of $\Lambda_\Gamma$ away from the $D_4^-$ vertices.  Near these we choose the generic deformation of a $D_4^-$ singularity 
by ``pushing out 3 swallowtails". In particular, after a local change of the metric $g$, we may assume that there is a unique order $1$ tangency between sheets $(2)$ and $(4)$ as labeled in Figure \ref{Generic-Perturbation}.

Let us now consider flow graphs for this $\widetilde{\Lambda}_\Gamma$.  Put the one positive puncture
at the Reeb chord for some face.  The flow from here cannot go to an edge of the graph, since the `function
difference'  \eqref{function difference} vanishes there; thus the flow must go to a neighborhood of some vertex.  

Fix a face and a vertex of that face.  We first consider Morse flow trees without trivalent vertices beginning at 
the Reeb chord in the face and ending near the vertex.  We claim there is exactly one. 
It is depicted in Figure \ref{Generic-Perturbation}-right; in the flow tree jargon, it has one switch 
and one cusp-end. The uniqueness of this tree
 follows directly from existence and uniqueness of trajectories of vector fields 
 (nearby trajectories depicted in Figure \ref{Possible trajectories}-left),
 and the fact that,
 for rigid trees and generic metrics, 
 switches may only appear at order $1$ tangencies.

To complete the argument, we have to eliminate the possibility of trivalent vertices from appearing.  There are two
kinds of these; in the usual jargon they are termed $Y_0$ and $Y_1$ vertices.  It will suffice to show 
that along the path of our existing tree, no branching can occur. 

A $Y_1$ vertex can only occur at a point in the base above which, in the front projection, 
a cusp edge sits between two other sheets.  In our situation, the cusp-edges are between the uppermost sheets 
so can never be involved in a $Y_1$ vertex.

The $Y_0$ vertex requires a sheet to appear between the two sheets presently carrying the flow tree. 
We divide our existing tree into six stages, $(i)$ through $(vi)$, as depicted in 
 Figure \ref{Flow-tree-slice}.  We will exclude any $Y_0$ vertex appearing during any of these stages. 
 We can immediately exclude stages $(i)$, $(ii)$ and $(vi)$, 
 since a $Y_0$ vertex may only appear if there is a sheet in between the ones which carry the flow tree.

If a $Y_0$ vertex appears in stage $(iii)$ then one of the outgoing edges must be 
carried by sheets $(2)$ and $(3)$; the resulting flow line
must tend to the intersection of those sheets, see trajectory $(d)$ in Figure \ref{Possible trajectories}; contradiction. The same is true for stage $(v)$. 

In stage $(iv)$, we must consider two different possible breakings, 
one which has an outgoing edge carried by sheets $(2)$ and $(4)$, and one which has an outgoing edge carried by sheets $(3)$ and $(4)$.  The case of sheets $(3)$ and $(4)$ is illustrated by trajectory  $(E)$ in Figure \ref{Possible trajectories},
which can only limit towards the intersection of sheets $(3)$ and $(4)$; contradiction.

The other case is illustrated in Figure \ref{Special-Case}: A flow tree which is carried by sheets $(2)$ and $(4)$ in this area is locked into the green area. By uniqueness of flows it may not touch the original part of the flow tree and by construction it cannot come close to any of the cusp-edges. Since there are no Reeb chords in this area is must limit towards the intersection of those two sheets; contradiction.  This completes the argument. 
\end{proof}

Having asked that all tangents to the flow trees at the Reeb chord lie in one half plane distinguishes a connected
component of the complement of the paths, namely the component which contains said half plane in $F_-$ and its negative
in $F_+$.  We term this the {\em distinguished component}.  Additionally,  we may use this to fix a numbering of the vertices. 
Consider in $F_+$ the arcs bounding holomorphic curves.  They are traveling
outward from the point where the Reeb line hits $F_+$.  Per
Proposition \ref{flow tree count},   
Consider the arc which, within this half plane, is the first met when traveling counterclockwise.  
We distinguish the vertex to which this arc travels as $v = v_1$, then continue numbering the vertices counterclockwise from
this one.  Then the distinguished component is the triangle whose vertices are the endpoint of the Reeb chord, 
 $v_1$, and $v_n$.

\begin{construction} \label{4 chain}
There is an $\R$-invariant nonwhere vanishing vector field $v$ on $\R \times \Lambda_\Gamma$ and
 $\R$-invariant 4-chain $V$ in $\R \times S^5$ such that 
$\partial V = 2 (\R \times \Lambda_\Gamma)$, and locally near $\partial V$, 
we have $V = \pm J\cdot v$. In addition, 
$$V^{\circ} \cap (\R \times \Lambda_\Gamma) = \R \times \mathfrak{p}_\Gamma$$
where $\mathfrak{p}_\Gamma$ lies over the distinguished component.

Moreover, the curves from Proposition \ref{flow tree count}
are disjoint from $V$, and framing their boundaries using $v$ gives the same result
as framing using the positive $\R$ direction.
\end{construction}
\begin{proof}
We are interested in constructing
 $\R$-invariant structures in $S^5 \times \R$, so will first build a corresponding 3-chain $U$ in $S^5$.  

Consider the Reeb vector field $R$ and the coordinate vector field $\frac{d}{dr}$ of $S^5\times \R_r$. 
Recall that the complex structure $J$ is chosen to ensure
$R = J \frac{d}{dr}$.   We will write $e^{Rt}$ for the time $t$ flow of the Reeb vector field.   

We begin by constructing $U$ locally inside $J^1 S^2 \times \R_r $.
First consider $3$-chains $U_0^+$ and $U_0^-$ defined by following $\Lambda_\Gamma$
for a long time along the positive or negative Reeb flow, and orient them so that 
$$\partial U_0^+ = \Lambda_\Gamma - e^{Rt}_* \Lambda_\Gamma \qquad \qquad  
\partial U_0^- = \Lambda_\Gamma - e^{-Rt}_* \Lambda_\Gamma$$

The $4$-chain $V_0^{\pm} = U_0^{\pm} \times \R_r$ is, locally near $\Lambda_\Gamma \times \R$, 
given by $J \frac{d}{dr}$.  Note that $U_0^+$ intersects $\Lambda_\Gamma$ at the positive endpoints
of the Reeb chords; similarly for $U_0^-$.  Regarding the $J$-holomorphic curves, recall that these 
may be taken arbitrarily close to the disk interpolating between the lift of the Morse flow tree to $\Lambda_\Gamma$. 
Thus $U_0^{\pm}$ meets these curves only along the Reeb chords. 

We will now adjust $U_0^{\pm}$ to avoid these coincidences along Reeb chords.   
Recall that the choice of $J$ depends ultimately on a metric on $S^2$.  For convenience, 
we take this metric to be locally the flat metric at the points where the Reeb chords appear.

Consider a local model around the Reeb chords. 
Locally the picture is given by fronts $f_+(x,y)=-x^2-y^2+C$ and $f_-(x,y)=x^2+y^2-C$
 in $J^0(\R^2_{x, y})$.  We may also assume 
 all the boundaries of the $J$-holomorphic curves are in the halfspace $\{x\geq 0\}$.
The Legendrian is locally given by:
\begin{align*}
    j^1f_+= \{(x,y,-2x,-2y,-x^2-y^2+C)\} \qquad \qquad j^1f_- = \{(x,y,2x,2y,x^2+y^2-C)\}
\end{align*}
In this trivialisation $J^1(\R^2)=\R^2_{x,y} \times\R^2_{dx,dy} \times \R_R$ consider a vector field given by 
$$v_\epsilon = \epsilon \cdot \lambda(x,y,dx,dy,R) \cdot (0,1,0,-2,-2y)$$ 
where $\lambda(\cdot)$ is a bump function in $J^1(\R^2)$ centered at $(0,0,0,0,C)$ and $\epsilon \ne 0$ is a real number.
Now consider the flow of $v'=J(\frac{d}{dr}+v_\epsilon)$ of $\Lambda_\Gamma$ in the Lagrangian projection. One immediately verifies that this vector field displaces the intersection of $\Pi_C(e_*^{v't}(j^1f_-))$ and $\Pi_\C(j^1f_+)$ along the $y$-axis. Thus by choosing $\lambda$ and $\epsilon$ small enough the intersection point is in the desired chamber of the face $F$. We do a similar push-off around the endpoints of the Reeb chords for the negative push-offs.

\vspace{2mm}
We now have a 3-chain $U_1 = U_1^+ + U_1^-$ in $J^1 S^2$ which has the asserted properties within $J^1 S^2$, but which either 
goes off to infinity in the positive and negative direction in the `function value', or has a second boundary given by a large positive
and negative pushoff of $\Lambda_\Gamma$.   We take the second viewpoint, denoting these pushoffs as $\Lambda_\Gamma^+$ 
and $\Lambda_\Gamma^-$.  That is, $\partial U_1^\pm = \Lambda_\Gamma - \Lambda_\Gamma^{\pm}$.  

We transplant the whole situation to a standard neighborhood of the standard two-dimensional 
Legendrian unknot in $\R^5 = J^1 \R^2$.  Now we continue
to build the 3-chain by following the  pushoffs, this time by the Reeb flow of this $J^1 \R^2$.  
This will carry $\Lambda_\Gamma^+$ and $\Lambda_\Gamma^-$ eventually to satellites of large positive and negative pushoffs of the 
standard unknot.  These are contained in Darboux charts disjoint from the original setup, where we may cap the resulting 3-chain off arbitrarily
to obtain chains $U^+_3$ and $U^-_3$ such that $\partial U_3^\pm = \Lambda_\Gamma$.  However, 
we have created some new intersections: $(U_3^+)^\circ \cap \Lambda_\Gamma$ is four points, namely the positive ends
of the long index two Reeb chords, which live near the end of the unique Reeb chord of the standard unknot.  Similarly for $U_3^-$ and the negative
ends of these chords. 

We have not yet fixed any specifics of how we draw the graph $\Gamma$ on the standard unknot $S^2 \subset J^1 \R^2$.  
We do so now: arrange that all faces of the graph save one live in some small neighborhood of some point
near the cusp edge of the front projection, 
and the remaining face occupies all the remainder of the unknot $S^2$.   The point is to ensure that
all endpoints of the long Reeb chords live above the same face $F$.  We may moreover arrange
that all endpoints of the long Reeb chord appear in e.g. the distinguished component. \footnote{The geometric choice of the position of the index $2$ Reeb chords mimics the set-up of the Casals-Murphy CPG differential algebra \cite{Casals-Murphy}.} 

Note that each of $F^+$ and $F^-$ has two negative and two positive endpoints of long Reeb chords. 
(Each of $F^+$ and $F^-$ has one long self chord, and one chord to the other face of each orientation.) 
Choose arbitrarily inside $F^+$ and $F^-$ paths from positive endpoints to negative endpoints; there are four
such paths.  
Consider the 4-chain obtained from the capping path by taking a normal disk at each point.  
So we have these
disk bundles $\Delta, \Delta', \Delta'', \Delta'''$.  Their boundaries
are 3-chains consisting of normal disks at the endpoints of the paths, plus
a tube connecting these and disjoint from $\Lambda_\Gamma$.   We may arrange that the normal disks
at endpoints agree set-theoretically with the piece of $(U_3^\pm)^\circ$ there; they naturally
come with the opposed orientation.   So we finally set
$$U = U_3^+ + U_3^- + \partial \Delta + \partial \Delta' + \partial \Delta'' + \partial \Delta'''$$ 
This chain finally has the desired properties.  
\end{proof}

\begin{remark}
One can get away with a less careful treatment of the long Reeb chords: put the graph arbitrarily on $S^2$ 
and choose paths arbitrarily, and perform the same construction.  The resulting 4-chain will not be disjoint
from disks, but since the extra terms $\partial \Delta \times \R$ are boundaries, its interior intersection with disks
will still vanish.  This would be enough for our purposes. 
\end{remark}

\section{Some fiddling with spin structures and signs} \label{sec: spin sign}

The system to coherently orient moduli of maps from 
holomorphic curves with Lagrangian boundary conditions involves the 
choice of spin structures on the Lagrangians \cite{FOOO-II, EES-orientation},
or more generally twisted spin structures with respect to some background class.  
The spin structures affect the sign with which holomorphic curves are counted. 
In this section we construct a twisted spin structure on $\Lambda_\Sigma$ which will
later turn out to give all positive signs, and relatedly we explain how background
classes give rise to sign lines. 

A spin structure on an orientable manifold determines a spin structure on 
an orientable codimension one hypersurface; in particular, a spin structure on a surface
determines a spin structure on any embedded (or indeed, as a trivialization is local, immersed) 
closed curve in the surface.  The circle admits two spin structures, one termed 
the `Lie group spin structure' as it corresponds to the trivialization of the tangent bundle from
the Lie group structure, and the other often called the `bounding spin structure', because it is 
the restriction of the unique spin structure on a disk.  

The following notation will be convenient: 

\begin{definition} \label{spin sign}
For a simple closed curve $C$ on a surface $\Sigma$ carrying a spin structure $\sigma$, 
we will write 
$$\sigma(C) = \begin{cases} 0 & \mbox{if} \,\, \sigma|_{C} \,\, \mbox{is the bounding spin structure} \\
1 &\mbox{if}\,\, \sigma|_C  \,\, \mbox{is the Lie group spin structure} \end{cases}
$$
and
$$[C]_{\sigma} := (-1)^{1+ \sigma(C)} [C] \in Sk(\Sigma)$$
\end{definition} 

There is a classical identification between spin structures on surfaces 
and quadratic refinements of the intersection pairing \cite{Johnson-spin},
namely that such $\sigma$ as above is the restriction to simple closed curves
of a function 
$\sigma: H_1(\Sigma, \Z/2\Z) \to \Z/2\Z$ 
satisfying
\begin{equation} \label{quadratic} \sigma(C+D) = \sigma(C) + \sigma(D) + C\cdot D \end{equation} 
In fact, isomorphism classes of spin structures are in bijection with such quadratic refinements.  
Note that on a $\Z/2\Z$ vector space given skew form, 
quadratic refinements are freely and uniquely characterized by their values on a basis.

\begin{lemma} \label{spin l k} 
All the $\ell_i$ have the Lie group spin structure iff the same holds for all the $\widetilde{E}_j$. 
\end{lemma}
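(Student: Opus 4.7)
The plan is to identify the spin structure $\sigma$ with a quadratic refinement of the mod-$2$ intersection pairing on $H_1(\Lambda_\Gamma, \Z/2\Z)$, via Johnson's classical bijection \cite{Johnson-spin}. Under this bijection the value $\sigma(C) \in \Z/2\Z$ for an embedded simple closed curve agrees with Definition \ref{spin sign}, and the quadratic refinement property
$$
\sigma(C+D) \;=\; \sigma(C) + \sigma(D) + C \cdot D
$$
extends inductively to
$$
\sigma(C_1 + \cdots + C_m) \;\equiv\; \sum_{i=1}^{m} \sigma(C_i) + \sum_{1 \le i < j \le m} C_i \cdot C_j \pmod{2}.
$$

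Next, I would apply this to the homological identity $\ell_k = \widetilde{E}_1 + \cdots + \widetilde{E}_{k-1}$ recorded just after Definition \ref{def: ell}. This reduces the lemma to computing the intersection pairings $\widetilde{E}_i \cdot \widetilde{E}_j$ on $\Lambda_\Gamma$. Two distinct edges of $\Gamma$ can meet only at a common vertex, which lifts to a single ramification point of the branched double cover $\pi : \Lambda_\Gamma \to S^2$. In the local model $z \mapsto z^2$ the two edge preimages are smooth arcs through the ramification point, meeting transversely there, and so each shared vertex of $\Gamma$ contributes $1$ mod $2$ to the intersection number. For a consecutive pair $E_i, E_{i+1}$ on the boundary of $F$ the shared vertex is $v_{i+1}$, and assuming $F$ does not pinch, no other pair $E_i, E_j$ shares a vertex; the inner double sum then reduces to the $k-2$ consecutive pairs $(1,2), (2,3), \ldots, (k-2, k-1)$, yielding
$$
\sigma(\ell_k) \;\equiv\; \sum_{j=1}^{k-1} \sigma(\widetilde{E}_j) + (k-2) \pmod{2}.
$$

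From this explicit formula both directions of the lemma follow immediately. In the ``if'' direction, assuming $\sigma(\widetilde{E}_j) = 1$ for every edge yields $\sigma(\ell_k) \equiv (k-1) + (k-2) \equiv 1 \pmod{2}$ for every face, basepoint, and $k$. In the ``only if'' direction, any edge $E$ of $\Gamma$ occurs as the first edge $E_1$ of some pair $(F, v)$ -- pick a face $F$ containing $E$ and take $v$ to be the endpoint of $E$ appearing first in the counterclockwise cyclic order on $\partial F$ -- so $\widetilde{E} = \ell_2(F, v)$ and the hypothesis forces $\sigma(\widetilde{E}) = \sigma(\ell_2) = 1$.

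The one technical subtlety I expect to have to handle is the possibility of a \emph{pinched} face, i.e.\ a face whose cyclic boundary revisits some vertex of $\Gamma$ (as can happen when $\Gamma$ contains a bridge of $F$ or similar degeneration). In that case some non-adjacent $E_i, E_j$ could share a vertex of $\Gamma$ and thereby contribute additional $1$'s to the inner double sum; one would need to verify, using the cyclic structure of $\partial F$ and trivalence at vertices, that any such extra contributions occur in pairs and so do not affect the parity count $k-2 \pmod 2$. This combinatorial case check is the only real obstacle; the rest of the argument is essentially formal.
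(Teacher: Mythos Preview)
Your proposal is correct and follows essentially the same route as the paper. Both reduce the statement to the identity $1 + \sigma(\ell_k) \equiv \sum_{j=1}^{k-1}(1+\sigma(\widetilde E_j)) \pmod 2$ via Johnson's quadratic refinement; the paper establishes this by induction on $k$, using $\ell_{k+1} = \ell_k + \widetilde{E}_k$ together with the single intersection fact $\ell_k \cdot \widetilde{E}_k \equiv 1 \pmod 2$, whereas you expand the sum directly and compute all pairwise $\widetilde{E}_i \cdot \widetilde{E}_j$. The inductive organization is marginally tidier---one intersection to verify per step rather than all at once---but not materially different, and the paper does not address the pinched-face concern you flag.
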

\begin{proof}
The claim is equivalent to the formula
$1 + \sigma(\ell_k) = \sum_{i=1}^{k-1} 1+\sigma(\widetilde{E}_i)$.  Since $\ell_2 = \widetilde{E}_1$, the 
case  $k=2$ is a tautology. 
Assume the result for $k$.  In homology, we have 
$\ell_{k+1} = \ell_k + \widetilde{E_{k}}$ and 
$\langle \ell_{k}, \widetilde{E}_{k} \rangle = 1 \pmod 2$.  
So, from Eq. \eqref{quadratic}, we conclude 
$\sigma(\ell_{k+1}) = \sigma(\ell_k) + \sigma(\widetilde{E_{k}}) + 1$. 
The result follows by induction. 
\end{proof} 

Consider an exact sequence of $\Z/2\Z$ vector spaces, 
$0 \to K \to Y \to Z \to 0$.  
Suppose $Z$
carries a skew form.  Then quadratic refinements of the pullback
form on $Y$ are all linear on $K$, and descend to a quadratic refinement
of the original form on $Z$ iff they vanish on $K$.  

\begin{lemma} \label{best spin structure} 
The surface $\Lambda_\Gamma$ admits a spin structure for which all $\widetilde{E}_i$ all carry the 
Lie group spin structure if and only if it has odd genus.   In this case, 
said spin structure is unique.
\end{lemma}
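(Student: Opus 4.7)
The plan is to translate the question into $\Z/2\Z$-linear algebra on $H_1(\Lambda_\Gamma, \Z/2\Z)$ via the classical correspondence between isomorphism classes of spin structures and quadratic refinements $\sigma: H_1(\Lambda_\Gamma, \Z/2\Z) \to \Z/2\Z$ of the intersection form, which restricts on simple closed curves to the function from Definition \ref{spin sign}. Such a $\sigma$ is freely and uniquely specified by its values on any generating set for $H_1(\Lambda_\Gamma, \Z/2\Z)$.

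The first main step is to analyze how the classes $[\widetilde{E}_i]$ sit in $H_1(\Lambda_\Gamma, \Z/2\Z)$, using the CW structure coming from $\pi^{-1}(\Gamma)$. For each face $F$ of $\Gamma$, the $2$-chain $F^+ + F^-$ has boundary $\sum_{E \in \partial F} \widetilde{E}$, giving face relations; and since the faces of $\pi^{-1}(\Gamma)$ alternate in sign around each edge, the $2$-chain $\sum_F F^-$ has boundary $\sum_E \widetilde{E}$, giving a ``top'' relation $\sum_E \widetilde{E} = 0$. The $F$ face relations themselves sum to $0$ (each $\widetilde{E}$ appears in exactly two of them), so yield only $F-1$ independent face relations. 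The top relation is independent from these: writing it as a combination of face relations would require a subset of faces meeting every edge exactly once, i.e.\ a proper $2$-coloring of the dual graph $\Gamma^*$, which is impossible since trivalence of $\Gamma$ forces $\Gamma^*$ to have triangular (odd-length) faces. Hence the $[\widetilde{E}_i]$ satisfy at least $F$ independent relations, so span a subspace of dimension at most $E - F = V - 2 = 2g$ by Euler's formula for $\chi(\Lambda_\Gamma) = 4 - V$. Since this matches $\dim H_1(\Lambda_\Gamma, \Z/2\Z)$, equality holds: the $[\widetilde{E}_i]$ generate $H_1(\Lambda_\Gamma, \Z/2\Z)$, and the face relations together with the top relation generate all relations among them.

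Uniqueness of $\sigma$ is now immediate. For existence I would verify consistency with the generating relations using the quadraticity formula
\[ \sigma\Bigl(\sum_i c_i \widetilde{E}_i\Bigr) \;=\; \sum_i c_i \sigma(\widetilde{E}_i) \;+\; \sum_{i<j} c_i c_j\, \widetilde{E}_i \cdot \widetilde{E}_j. \]
The local branched-cover model $z \mapsto z^2$ at each vertex of $\Gamma$ shows that $\widetilde{E}_i \cdot \widetilde{E}_j$ equals the number of common vertices of $E_i$ and $E_j$ modulo $2$: at each shared vertex the two lifts locally appear as distinct diameters through the branch point, meeting transversely in one point. For a $d_F$-gon face the $d_F$ cyclically adjacent edge-pairs each contribute $1$, yielding $\sigma\bigl(\sum_{E \in \partial F} \widetilde{E}\bigr) = d_F + d_F = 0$ automatically. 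For the top relation, $\sigma(\sum_E \widetilde{E}) = E + \sum_v \binom{3}{2} = E + 3V \equiv V/2 \pmod 2$ (using $E = 3V/2$), which vanishes iff $V \equiv 0 \pmod 4$, equivalently iff $g = V/2 - 1$ is odd.

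The main obstacle is the first step: recognizing the ``top'' relation $\sum_E \widetilde{E} = 0$, which is easy to overlook because it is not the boundary of any single face, and then confirming that together with the face relations it exhausts all relations among the $[\widetilde{E}_i]$. The sharp Euler-characteristic count $E - F = 2g$ both forces this extra relation into play and rules out any further ones; the remaining intersection-number checks in the local $z \mapsto z^2$ model are routine.
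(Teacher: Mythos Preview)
Your approach matches the paper's: both translate to quadratic refinements, set $\bar\sigma(\widetilde E)=1$ on the free $\Z/2\Z$-module on edges, and check that $\bar\sigma$ descends by verifying it vanishes on the face relations $R_F=\sum_{E\subset\partial F}E$ and the global relation $R_\Gamma=\sum_E E$. Your parity computations ($\bar\sigma(R_F)=d_F+d_F=0$ and $\bar\sigma(R_\Gamma)=E+3V\equiv V/2\equiv 1+g\pmod 2$) agree with the paper's. You also supply detail the paper omits, in particular the argument via non-bipartiteness of $\Gamma^*$ that $R_\Gamma$ is independent of the $R_F$.

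There is, however, a logical slip in that extra detail. From ``at least $F$ independent relations'' you deduce $\dim\bigl(\mathrm{span}\{[\widetilde E_i]\}\bigr)\le E-F=2g$, and then write ``since this matches $\dim H_1$, equality holds: the $[\widetilde E_i]$ generate $H_1$''. But the bound $\dim(\mathrm{span})\le 2g$ is automatic for any subspace of the $2g$-dimensional $H_1$; it cannot by itself force the span to be all of $H_1$. You need an independent reason the $\widetilde E_i$ generate --- for instance, check directly in the CW structure that $Z_1=\mathrm{span}(\widetilde E_i)+B_1$: modulo $\mathrm{span}(\widetilde E_i)$ one has $\partial F^+\equiv\partial F^-$, and these images are just the face relations of $(\Gamma,S^2)$, so the image of $B_1$ in $C_1/\mathrm{span}(\widetilde E_i)$ has dimension $F-1$, whence $\dim(\mathrm{span}(\widetilde E_i)+B_1)=E+(F-1)=2E-V+1=\dim Z_1$. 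Once generation is established, your dimension count does correctly show that the face and top relations exhaust the kernel. The paper itself simply asserts both generation and the kernel description without proof, so this slip lies in material you added beyond what the paper argues; the core of your proof is correct and coincides with the paper's.
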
 
\begin{proof}
As the $\widetilde{E}_i$ generate  $H_1(\Lambda_\Gamma, \Z/2\Z)$,
there is at most one such spin structure.  

Let $\mathbf{E}$ be the $\Z/2\Z$ vector space generated by the edges of $\Gamma$, and with
skew form pulled back from  $H_1(\Lambda_\Gamma, \Z/2\Z)$ along the map $E \mapsto \widetilde{E}$. 
Consider the unique quadratic refinement $\bar \sigma$ of the pairing on $\mathbf{E}$
satisfying $\bar \sigma(E) = 1$.  We must check whether 
$\bar{\sigma}$ vanishes on the kernel of  $\mathbf{E} \to H_1(\Lambda_\Gamma, \Z/2\Z)$. 
As $\bar \sigma$ is linear here, we may check on a basis. 

The kernel is generated by:  $R_\Gamma$, the sum of all edges in the graph, 
and, for each face $F$, the sum  $R_F$ all edges around $F$. 
Using Eq. \eqref{quadratic} we compute 
\begin{equation} \label{face spin}
\bar\sigma(R_F) \equiv \sum_{E \subset \partial F} 1 + \sum_{E \ne E' \subset \partial F} \widetilde{E} \cdot \widetilde{E'} = \#\mathrm{edges}(F) + \mathrm{vertices}(F) \equiv 0 \pmod 2
\end{equation}
\begin{equation}
\label{total spin} 
\bar \sigma(R_\Gamma) \equiv \sum_E 1 + \sum_{E \ne E'} \widetilde{E} \cdot \widetilde{E}' = \# \mathrm{edges} + 3 \# \mathrm{vertices} 
\equiv 1 + \mathrm{genus}(\Lambda_\Gamma) \pmod 2
\end{equation} 
This completes the proof.
\end{proof} 

\begin{example}
A sphere has a unique spin structure, which evidently restricts to every simple closed curve as the bounding spin structure. 
The torus has its own Lie group spin structure, which restricts to every simple closed curve as the Lie group spin structure. 
\end{example}

As we saw in Section \ref{sec: face}, the conjectures of \cite{Schrader-Shen-Zaslow} are naturally formulated
in a skein with sign and framing lines.  Framing lines naturally generated by 4-chains
in the skein-valued curve counting of \cite{SOB}, and  in Construction \ref{4 chain} we gave a 4-chain which introduces
 the framing lines which appeared in Section \ref{sec: face}.  We now explain how sign lines arise geometrically.  

Recall that, in the process of setting up the coherent orientations on moduli, 
one can fix a nontrivial `background class' $\beta \in H^2(X, \Z/2 \Z)$,
in which case, rather than a spin structure on Lagrangians $L\subset X$, one needs rather a `relative pin structure'
with respect to $\beta$.  

Represent $\beta$ by a  Poincar\'e dual closed (Borel-Moore) chain $B \in Z_{\mathrm{dim}(X)-2}^{BM}(X, \Z / 2 \Z)$; 
assume $B$ meets $L$ transversely.  Let $C$ be the boundary of a small disk in $L$ which is transverse to $L \cap B$; 
then relative pin structures for $\beta$  can be identified with spin structures on $L \setminus B$ which restrict to
the Lie group spin structure on $C$.  We will call these $B$-twisted spin structures on $L$.  
If $[B] = 0 \in H^{BM}(M, \Z/2\Z)$ then these may be identified with usual spin structures, but the identification will
depend on a choice of bounding chain for $B$. 

With the background class thus identified geometrically, 
the sign with which a holomorphic curve is counted 
will change in a family where its boundary passes once transversely $L \cap B$; 
compare e.g. \cite[Lemma A.5]{Abouzaid-loops}.  

\vspace{2mm}
Thus in the presence of such a background class we should count 
curves in the skein with a sign line at $L \cap B$.  
\vspace{2mm} 

We use a 4-chain satisfying $\partial V = 2 L$, which thus defines an element of 
$Z_{\mathrm{dim}(X)-2}^{BM}(X, \Z / 2 \Z)$, and may be chosen
as a background class.  In this case, 
the curve counts will take values in (a completion of) a skein with framing and sign lines, namely 
$Sk(L, V^{\circ} \cap L, (-1)^{V^{\circ} \cap L}).$

\begin{lemma} \label{best twisted spin structure} 
There is a unique $\mathfrak{p}_\Gamma$-twisted spin structure on $\Lambda_\Gamma$ for which
all the $\widetilde{E}$ have the Lie group spin structure. 
\end{lemma}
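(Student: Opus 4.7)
The plan is to mirror the strategy of Lemma \ref{best spin structure} on the punctured surface $\Lambda_\Gamma \setminus \mathfrak{p}_\Gamma$. Recall that a $\mathfrak{p}_\Gamma$-twisted spin structure is precisely an ordinary spin structure on $\Lambda_\Gamma \setminus \mathfrak{p}_\Gamma$ whose restriction to each small puncture loop $\gamma_F^\pm$ is the Lie group spin structure. Via the classical correspondence, this amounts to finding a quadratic refinement $\sigma$ of the intersection pairing on $H_1(\Lambda_\Gamma \setminus \mathfrak{p}_\Gamma, \Z/2\Z)$ with $\sigma(\widetilde{E}) = 1$ for every edge $E$ of $\Gamma$ and $\sigma(\gamma_F^\pm) = 1$ for every face $F$.

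First I will introduce $\mathbf{E}^{\mathrm{ext}}$, the free $\Z/2\Z$-vector space on the generators $\widetilde{E}$ and $\gamma_F^\pm$, equipped with the pullback of the intersection pairing, together with the unique quadratic refinement $\bar\sigma$ taking the value $1$ on every generator. As in Lemma \ref{best spin structure}, the task reduces to checking that $\bar\sigma$ vanishes on the kernel $K^{\mathrm{ext}}$ of the surjection $\mathbf{E}^{\mathrm{ext}} \to H_1(\Lambda_\Gamma \setminus \mathfrak{p}_\Gamma, \Z/2\Z)$. A dimension count based on $\dim H_1(\Lambda_\Gamma \setminus \mathfrak{p}_\Gamma, \Z/2\Z) = 2g + 2|F| - 1$ shows $\dim K^{\mathrm{ext}} = |F|+1$. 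The 2-chain $F^+ \cup F^-$, punctured at $p_F^+$ and $p_F^-$, produces for each face the face relation $R_F + \gamma_F^+ + \gamma_F^- \in K^{\mathrm{ext}}$, accounting for $|F|$ independent elements.

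The main obstacle is identifying the one remaining generator of $K^{\mathrm{ext}}$. For this I will use the 2-chain $\sum_F F^+$ in $\Lambda_\Gamma$. The key geometric input is that the bicoloring of the faces of $\pi^{-1}(\Gamma)$ is proper: every edge of $\pi^{-1}(\Gamma)$ separates a $+$ face from a $-$ face, so for each edge $E$ of $\Gamma$ the two arcs of $\widetilde{E}$ distribute, one to $\partial F^+$ and one to $\partial F'^+$, where $F,F'$ are the two faces of $\Gamma$ adjacent to $E$. Consequently $\partial(\sum_F F^+) = R_\Gamma$ as a $1$-cycle in $\Lambda_\Gamma$, and puncturing at each $p_F^+$ promotes this to the global relation $R_\Gamma + \sum_F \gamma_F^+ \in K^{\mathrm{ext}}$.

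Finally I check $\bar\sigma$ on both families. For the face relation, Eq. \eqref{face spin} gives $\bar\sigma(R_F) \equiv 0$ and directly $\bar\sigma(\gamma_F^+ + \gamma_F^-) = 1+1+0 = 0$, with no cross-intersection term. For the global relation, Eq. \eqref{total spin} gives $\bar\sigma(R_\Gamma) \equiv 1+g \pmod 2$ while $\bar\sigma(\sum_F \gamma_F^+) = |F|$, and a short arithmetic check using $g = (|V|-2)/2$, $|F| = 2 + |V|/2$, and $|V|$ even yields $1 + g + |F| \equiv 0 \pmod 2$: the twist by $\mathfrak{p}_\Gamma$ precisely cancels the genus-parity obstruction that appeared in Lemma \ref{best spin structure}. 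Uniqueness is then automatic, since $\sigma$ is prescribed on a spanning set of $H_1(\Lambda_\Gamma \setminus \mathfrak{p}_\Gamma, \Z/2\Z)$.
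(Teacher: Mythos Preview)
Your proof is correct and follows essentially the same strategy as the paper's: define $\bar\sigma$ on a free $\Z/2\Z$-vector space mapping onto $H_1(\Lambda_\Gamma\setminus\mathfrak{p}_\Gamma,\Z/2\Z)$, then check it vanishes on the kernel. The only difference is bookkeeping: the paper takes as generators the $\widetilde E$ together with only the $\partial F^+$ (homologous to your $\gamma_F^+$), so its kernel is one-dimensional, generated by $R_\Gamma+\sum_F\partial F^+$; after verifying $\bar\sigma$ descends, it checks $\bar\sigma(\partial F^-)=1$ separately from $\partial F^++\partial F^-=R_F$ and Eq.~\eqref{face spin}. You instead include both $\gamma_F^+$ and $\gamma_F^-$ among the generators, giving a $(|F|+1)$-dimensional kernel and absorbing the $\partial F^-$ check into the face relations $R_F+\gamma_F^++\gamma_F^-$. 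Either way the decisive computation is the same parity identity $\bar\sigma(R_\Gamma)+|F|\equiv 0\pmod 2$, which the paper phrases via Euler's formula and you via $g=(|V|-2)/2$, $|F|=2+|V|/2$.
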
 
\begin{proof}
Uniqueness again follows from the fact that the $\widetilde{E}$ generate $H_1(\Lambda_\Gamma, \Z/2\Z)$. 
Regarding existence: we are supposed to construct a spin structure on $\Lambda_\Gamma \setminus \mathfrak{p}_\Gamma$ 
such that all the $\widetilde{E}$ and all the $\partial F^+$ and $\partial F^-$ have the Lie group spin structure. 
Now $H_1(\Lambda_\Gamma \setminus \mathfrak{p}_\Gamma, \Z/2\Z)$ is generated by the $\widetilde{E}$
and the $\partial F^+$ subject to the sole relation $R_\Gamma = \sum_F \partial F^+$, with notation as in
the proof of Lemma \ref{best spin structure}. 

Consider the $\Z/2\Z$ vector space
generated by the edges and faces, with pairing pulled back from $H_1(\Lambda_\Gamma \setminus \mathfrak{p}_\Gamma, \Z/2\Z)$
under the map $E \mapsto \widetilde E$ and $F \mapsto \partial F^+$.   Define $\bar \sigma$ on this space 
by $\bar \sigma(E) = 1 = \bar \sigma (F)$.  To see that $\bar \sigma$ descends to  $H_1(\Lambda_\Gamma \setminus \mathfrak{p}_\Gamma, \Z/2\Z)$, 
we must check 
that $\bar \sigma( \sum E) = \bar \sigma(\sum F)$.  We computed the left hand side in Eq. \ref{total spin} to 
be $\#\mathrm{edges} + 3 \# \mathrm{vertices}$.  The right hand side is evidently $\# \mathrm{faces}$.  
By Euler's formula for planar graphs, these are congruent modulo two. 

Having shown that $\bar \sigma$ descends, we must check that $\bar \sigma(\partial F^-) = 1$.  This
follows from the relation
 $\partial F^+ + \partial F^- = R_F$ and Eq. \eqref{face spin}. 
\end{proof}

\section{Skein valued operator equations}

\vspace{2mm}
Our main result is the following:

\begin{theorem} \label{main theorem}
Let $\Gamma$ be a trivalent planar graph on $S^2$, and 
let $(X, L)$ be any filling of $(S^5, \Lambda_\Gamma)$.  
Fix any compatible vector field and 4-chain $(w, W)$ for $L$ extending the $(v, V)$ of Construction \ref{4 chain},
and a spin structure $\sigma$ on $L$; we also write $\sigma$ for the induced
spin structure on $\partial L = \Lambda_\Gamma$.    

Let $F$ be any face of $\Gamma$, and $v \in F$ any vertex, and consider
$$A_{\Gamma, F, v} := a^{-1}[\bigcirc] 
+ [\ell_2]_{\sigma} + [\ell_3]_{\sigma} + \cdots [\ell_n]_{\sigma}  \in Sk(\Lambda_\Gamma, \mathfrak{p}_{\Gamma})$$ 
where the $\ell_k$ are as in Definition \ref{def: ell}, and the subscripts indicate signs as in Definition \ref{spin sign}. 
Let $\Psi_{X, L} \in \widehat{Sk}(L)$ be the skein-valued open curve count.   Then $A_{\Gamma, F, v} \Psi_{X, L} = 0$. 

If we define curve counts using $[W] \in H_4^{BM}(X, \Z/2\Z)$ as a background class,
and choose correspondingly a $(W^\circ \cap L)$-twisted spin structure 
on $L$,  then 
the corresponding statements hold 
with $A_{\Gamma, F, v} \in Sk(\Lambda_\Gamma, \mathfrak{p}_{\Gamma}, (-1)^{\mathfrak{p}_{\Gamma}})$.
\end{theorem}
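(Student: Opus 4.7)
The plan is to apply Lemma \ref{recursion} to $(X,L)$ with the index-one Reeb chord $\rho_F$ attached to the face $F$. By Lemma \ref{positive}, the pair $(S^5,\Lambda_\Gamma)$ is Reeb-positive and $\rho_F$ can be arranged to be of minimal action, so Lemma \ref{recursion} applies without further transversality hypotheses and produces the operator equation $A_{\rho_F}\,\Psi_{X,L}=0$. The remaining content of the theorem is then the identification of $A_{\rho_F}$ with the explicit element $A_{\Gamma,F,v}$.

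To compute $A_{\rho_F}$, I would enumerate the contributing curves via Proposition \ref{flow tree count}: for a suitable almost complex structure they are transversely cut out rigid embedded disks $C_1,\dots,C_n$, one for each vertex of $F$, indexed so that $C_1$ lies in the distinguished component of the front near $\rho_F$. By Construction \ref{4 chain}, each $C_k$ is disjoint from the 4-chain $V$, so $a^{C_k\cap V}=1$, and the boundary framing induced by the vector field $v$ coincides with the positive $\R$-direction framing. The disks all share a common Euler-characteristic factor, which is a unit in the skein coefficient ring and plays no role in the annihilation identity.

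Next, I would fix a capping path for $\rho_F$ inside the distinguished component. With this choice, the capped boundary of $C_k$ is, for $k\ge 2$, isotopic in $\Lambda_\Gamma$ to the loop $\ell_k$ of Definition \ref{def: ell}, while for $k=1$ it is a small contractible loop in $\Lambda_\Gamma$. A local computation at the standard model of an index-one Reeb chord identifies the tangent-field framing of this contractible loop as $-1$, so the $C_1$-contribution in the HOMFLYPT skein is $a^{-1}[\bigcirc]$. For $k\ge 2$, the spin orientation conventions of \cite{FOOO-II, EES-orientation}, combined with the quadratic-refinement identity \eqref{quadratic}, produce the sign $(-1)^{1+\sigma(\ell_k)}$, matching the notation $[\ell_k]_\sigma$ of Definition \ref{spin sign}. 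Summing over $k$ yields $A_{\rho_F}=a^{-1}[\bigcirc]+\sum_{k=2}^{n}[\ell_k]_\sigma=A_{\Gamma,F,v}$, which proves the first claim.

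For the background-class version, the 4-chain $W$ representing $[W]\in H^{BM}_4(X,\Z/2\Z)$ automatically introduces a sign line along $W^\circ\cap L$ which, at the ideal boundary, restricts to $\mathfrak{p}_\Gamma$ as in Construction \ref{4 chain} and Section \ref{sec: spin sign}. The same enumeration and weight calculation applies, with the spin structure replaced by a $(W^\circ\cap L)$-twisted one; the quadratic refinement \eqref{quadratic} then assembles the signs into $[\ell_k]_\sigma$ inside $Sk(\Lambda_\Gamma,\mathfrak{p}_\Gamma,(-1)^{\mathfrak{p}_\Gamma})$, with existence (and uniqueness) of the preferred twisted spin structure, for which all contributions can be made positive, supplied by Lemma \ref{best twisted spin structure}. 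The main technical obstacle in the whole argument is the sign and framing bookkeeping of the third paragraph: showing by an explicit local Reeb-chord model that the framing of $C_1$ is precisely $-1$, so that the contribution is exactly $a^{-1}[\bigcirc]$ rather than some other power of $a$, and matching the orientation conventions of \cite{FOOO-II, EES-orientation} with the quadratic refinement on the nose across all the disks $C_k$ for $k\ge 2$, particularly in the twisted case where background-class signs and spin-structure signs must combine correctly.
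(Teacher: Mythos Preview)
Your proposal is correct and follows essentially the same approach as the paper: apply Lemma \ref{recursion} via the Reeb-positivity of Lemma \ref{positive}, enumerate the contributing disks via Proposition \ref{flow tree count}, choose a capping path in the distinguished component, and read off the capped boundaries as the $\ell_k$ together with a framed unknot. The one detail the paper makes explicit that you leave as a ``technical obstacle'' is the sign of the unknot term: the paper observes that the boundary of $C_1$ has vanishing winding number, hence inherits the Lie group spin structure from the plane, hence contributes with sign $+1$, while the $a^{-1}$ is read off by inspection of the framed capped boundary (with the paper noting that $a$ versus $a^{-1}$ is ultimately a global orientation convention).
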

\begin{proof}
Fix a face $F$ and let $\rho$ be the Reeb chord corresponding to that face. 
By Lemma \ref{positive}, we may appeal to Lemma \ref{recursion}, from which we learn that 
$A_\rho \Psi_{X, L} = 0$, where $A_\rho$ is the count of curves in the symplectization, 
made an element of the skein by the choice of a capping path.  We determined these curves in 
Proposition \ref{flow tree count}; it remains only to choose a capping path and determine
the corresponding element of the skein.  

We take the capping path
as depicted in blue in the Figure \ref{capping}. 
By inspection, the resulting element of the skein is $A_{\Gamma, F, v}$.   
The $a^{-1}$ comes because the corresponding path displayed
is an unknot with nontrivial framing.  (Whether it is $a$ or $a^{-1}$ depends on
a global orientation choice for $L$ when defining the skein.) 
Note if we had chosen instead the capping path still in the distinguished component but 
so that $p_{\pm}$ were on the other side of it, this 
would change the relation only by an overall factor.

It remains only to discuss the signs.  Sign conventions for holomorphic curves
with Reeb punctures depend on  choices at the Reeb chord \cite{EES-orientation, EENS}. We are only interested
in the relative sign between different disks bounding the same Reeb chord, 
so these choices are irrelevant.  We are studying disks with only one positive puncture, 
so there is no discussion to be had about the determinant varying on the space of conformal structures.
The relative signs are then governed by comparing spin
structures on the boundaries of the disks, with respect to some fixed capping path.  We use 
the capping path we have already chosen.  We have eaten most of the signs into the notation; 
the only thing remaining to observe is that the curve giving the unknot contribution has vanishing winding number, 
hence the spin structure on the plane restricts to the Lie group spin structure, hence  the 
corresponding sign is $+1$.  
\end{proof} 

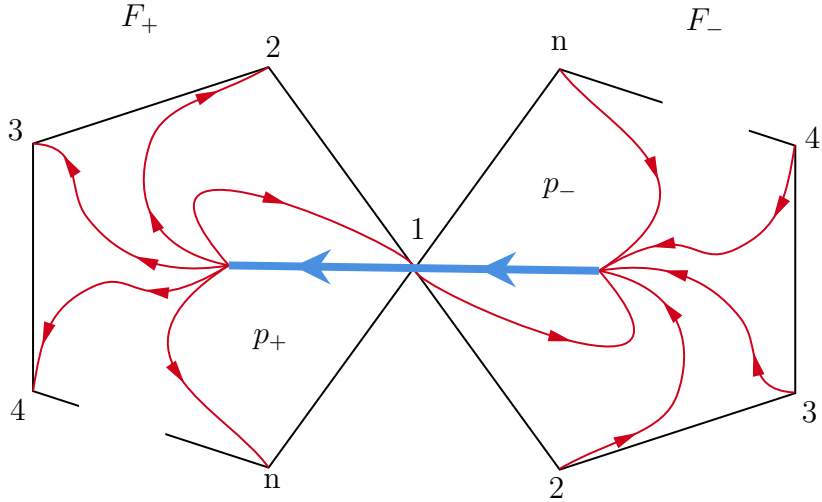
\begin{figure}
\vspace{2mm}

\begin{center}
\tikzset{every picture/.style={line width=0.75pt}} 

\begin{tikzpicture}[x=0.75pt,y=0.75pt,yscale=-1,xscale=1]

\draw   (236.5,153.25) -- (163.08,254.3) -- (44.29,215.7) -- (44.29,90.8) -- (163.08,52.2) -- cycle ;
\draw [color={rgb, 255:red, 208; green, 2; blue, 27 }  ,draw opacity=1 ]   (143,152.25) .. controls (72.5,71) and (236.5,143.25) .. (236.5,153.25) ;
\draw [shift={(172.18,121.26)}, rotate = 197.5] [fill={rgb, 255:red, 208; green, 2; blue, 27 }  ,fill opacity=1 ][line width=0.08]  [draw opacity=0] (12,-3) -- (0,0) -- (12,3) -- cycle    ;
\draw [color={rgb, 255:red, 208; green, 2; blue, 27 }  ,draw opacity=1 ]   (163.08,52.2) .. controls (135.5,68) and (110.5,70) .. (103.5,90) .. controls (96.5,110) and (96.5,143) .. (143,152.25) ;
\draw [shift={(138.27,64.02)}, rotate = 157.17] [fill={rgb, 255:red, 208; green, 2; blue, 27 }  ,fill opacity=1 ][line width=0.08]  [draw opacity=0] (12,-3) -- (0,0) -- (12,3) -- cycle    ;
\draw [shift={(102.63,123.72)}, rotate = 65.32] [fill={rgb, 255:red, 208; green, 2; blue, 27 }  ,fill opacity=1 ][line width=0.08]  [draw opacity=0] (12,-3) -- (0,0) -- (12,3) -- cycle    ;
\draw [color={rgb, 255:red, 208; green, 2; blue, 27 }  ,draw opacity=1 ]   (44.29,90.8) .. controls (70.5,92) and (60.73,113.93) .. (71.5,128) .. controls (82.27,142.07) and (102.5,159) .. (143,152.25) ;
\draw [shift={(59.39,96.38)}, rotate = 57.7] [fill={rgb, 255:red, 208; green, 2; blue, 27 }  ,fill opacity=1 ][line width=0.08]  [draw opacity=0] (12,-3) -- (0,0) -- (12,3) -- cycle    ;
\draw [shift={(95.42,147.62)}, rotate = 22.19] [fill={rgb, 255:red, 208; green, 2; blue, 27 }  ,fill opacity=1 ][line width=0.08]  [draw opacity=0] (12,-3) -- (0,0) -- (12,3) -- cycle    ;
\draw [color={rgb, 255:red, 208; green, 2; blue, 27 }  ,draw opacity=1 ]   (44.29,215.7) .. controls (48.56,189.22) and (51.79,173.56) .. (74.5,163) .. controls (97.21,152.44) and (95.5,181) .. (143,152.25) ;
\draw [shift={(48.92,192.81)}, rotate = 288.56] [fill={rgb, 255:red, 208; green, 2; blue, 27 }  ,fill opacity=1 ][line width=0.08]  [draw opacity=0] (12,-3) -- (0,0) -- (12,3) -- cycle    ;
\draw [shift={(100.73,164.76)}, rotate = 7.08] [fill={rgb, 255:red, 208; green, 2; blue, 27 }  ,fill opacity=1 ][line width=0.08]  [draw opacity=0] (12,-3) -- (0,0) -- (12,3) -- cycle    ;
\draw [color={rgb, 255:red, 208; green, 2; blue, 27 }  ,draw opacity=1 ]   (143,152.25) .. controls (72.5,198) and (142.5,225) .. (163.08,254.3) ;
\draw [shift={(120.66,211.85)}, rotate = 235.13] [fill={rgb, 255:red, 208; green, 2; blue, 27 }  ,fill opacity=1 ][line width=0.08]  [draw opacity=0] (12,-3) -- (0,0) -- (12,3) -- cycle    ;
\draw  [color={rgb, 255:red, 255; green, 255; blue, 255 }  ,draw opacity=1 ][fill={rgb, 255:red, 255; green, 255; blue, 255 }  ,fill opacity=1 ] (68,215) -- (110.5,215) -- (110.5,257) -- (68,257) -- cycle ;
\draw   (236.5,154.25) -- (309.93,53.21) -- (428.72,91.83) -- (428.7,216.74) -- (309.9,255.31) -- cycle ;
\draw [color={rgb, 255:red, 208; green, 2; blue, 27 }  ,draw opacity=1 ]   (329.79,154.95) .. controls (400.28,236.21) and (236.5,164.25) .. (236.5,154.25) ;
\draw [shift={(316.18,190.55)}, rotate = 194.19] [fill={rgb, 255:red, 208; green, 2; blue, 27 }  ,fill opacity=1 ][line width=0.08]  [draw opacity=0] (12,-3) -- (0,0) -- (12,3) -- cycle    ;
\draw [color={rgb, 255:red, 208; green, 2; blue, 27 }  ,draw opacity=1 ]   (309.69,255) .. controls (337.28,239.2) and (362.28,237.21) .. (369.28,217.21) .. controls (376.28,197.21) and (376.29,164.21) .. (329.79,154.95) ;
\draw [shift={(349.08,236.78)}, rotate = 155.13] [fill={rgb, 255:red, 208; green, 2; blue, 27 }  ,fill opacity=1 ][line width=0.08]  [draw opacity=0] (12,-3) -- (0,0) -- (12,3) -- cycle    ;
\draw [shift={(361.75,170.24)}, rotate = 49.93] [fill={rgb, 255:red, 208; green, 2; blue, 27 }  ,fill opacity=1 ][line width=0.08]  [draw opacity=0] (12,-3) -- (0,0) -- (12,3) -- cycle    ;
\draw [color={rgb, 255:red, 208; green, 2; blue, 27 }  ,draw opacity=1 ]   (428.49,216.42) .. controls (402.28,215.21) and (412.06,193.28) .. (401.29,179.21) .. controls (390.52,165.14) and (370.29,148.21) .. (329.79,154.95) ;
\draw [shift={(407.57,196.13)}, rotate = 77.08] [fill={rgb, 255:red, 208; green, 2; blue, 27 }  ,fill opacity=1 ][line width=0.08]  [draw opacity=0] (12,-3) -- (0,0) -- (12,3) -- cycle    ;
\draw [shift={(362.37,154.8)}, rotate = 13.37] [fill={rgb, 255:red, 208; green, 2; blue, 27 }  ,fill opacity=1 ][line width=0.08]  [draw opacity=0] (12,-3) -- (0,0) -- (12,3) -- cycle    ;
\draw [color={rgb, 255:red, 208; green, 2; blue, 27 }  ,draw opacity=1 ]   (428.51,91.51) .. controls (424.24,118) and (421,133.65) .. (398.29,144.21) .. controls (375.58,154.77) and (377.3,126.21) .. (329.79,154.95) ;
\draw [shift={(417.51,128.73)}, rotate = 299.64] [fill={rgb, 255:red, 208; green, 2; blue, 27 }  ,fill opacity=1 ][line width=0.08]  [draw opacity=0] (12,-3) -- (0,0) -- (12,3) -- cycle    ;
\draw [shift={(356.08,142.93)}, rotate = 349.79] [fill={rgb, 255:red, 208; green, 2; blue, 27 }  ,fill opacity=1 ][line width=0.08]  [draw opacity=0] (12,-3) -- (0,0) -- (12,3) -- cycle    ;
\draw [color={rgb, 255:red, 208; green, 2; blue, 27 }  ,draw opacity=1 ]   (329.79,154.95) .. controls (400.3,109.21) and (330.3,82.2) .. (309.73,52.9) ;
\draw [shift={(359.59,109.25)}, rotate = 248.94] [fill={rgb, 255:red, 208; green, 2; blue, 27 }  ,fill opacity=1 ][line width=0.08]  [draw opacity=0] (12,-3) -- (0,0) -- (12,3) -- cycle    ;
\draw  [color={rgb, 255:red, 255; green, 255; blue, 255 }  ,draw opacity=1 ][fill={rgb, 255:red, 255; green, 255; blue, 255 }  ,fill opacity=1 ] (404.8,92.21) -- (362.3,92.21) -- (362.31,50.21) -- (404.81,50.21) -- cycle ;
\draw [color={rgb, 255:red, 74; green, 144; blue, 226 }  ,draw opacity=1 ][line width=3]    (143,152.25) -- (232.48,153.54) -- (329.79,154.95) ;
\draw [shift={(175.84,152.72)}, rotate = 0.83] [fill={rgb, 255:red, 74; green, 144; blue, 226 }  ,fill opacity=1 ][line width=0.08]  [draw opacity=0] (18.75,-9.01) -- (0,0) -- (18.75,9.01) -- (12.45,0) -- cycle    ;
\draw [shift={(269.24,154.07)}, rotate = 0.83] [fill={rgb, 255:red, 74; green, 144; blue, 226 }  ,fill opacity=1 ][line width=0.08]  [draw opacity=0] (18.75,-9.01) -- (0,0) -- (18.75,9.01) -- (12.45,0) -- cycle    ;

\draw (233,127) node [anchor=north west][inner sep=0.75pt]   [align=left] {1};
\draw (160,35) node [anchor=north west][inner sep=0.75pt]   [align=left] {2};
\draw (30,77) node [anchor=north west][inner sep=0.75pt]   [align=left] {3};
\draw (31.29,218.7) node [anchor=north west][inner sep=0.75pt]   [align=left] {4};
\draw (159.08,256.3) node [anchor=north west][inner sep=0.75pt]   [align=left] {n};
\draw (303,258) node [anchor=north west][inner sep=0.75pt]   [align=left] {2};
\draw (430.49,219.42) node [anchor=north west][inner sep=0.75pt]   [align=left] {3};
\draw (432,81) node [anchor=north west][inner sep=0.75pt]   [align=left] {4};
\draw (304,35) node [anchor=north west][inner sep=0.75pt]   [align=left] {n};
\draw (87,19) node [anchor=north west][inner sep=0.75pt]   [align=left] {$\displaystyle F_{+}$};
\draw (372,21) node [anchor=north west][inner sep=0.75pt]   [align=left] {$\displaystyle F_{-}$};
\draw (154,183) node [anchor=north west][inner sep=0.75pt]   [align=left] {$\displaystyle p_{+}$};
\draw (300,107) node [anchor=north west][inner sep=0.75pt]   [align=left] {$\displaystyle p_{-}$};
\end{tikzpicture}
\end{center}
\caption{\label{capping} The capping path.}
\end{figure}

We recover the formula of Schrader, Shen, and Zaslow (Def. \ref{def: R}) by specializing 
to the linking skein, using $W$ as a background class, and taking the 
twisted spin structure of Lemma \ref{best twisted spin structure}.  
It follows that their operator annihilates the corresponding specialization of
$\Psi_{X, L}$ to the linking skein.

\begin{remark} 
It is easy to see and noted in \cite{Schrader-Shen-Zaslow} that 
$R_{\Gamma, F, v}$ depends of the choice of $v \in F$ only by scalar multiple. After the appearance of this article, the corresponding statement for $A_{\Gamma,F,v}$ was shown in \cite{HSZ}.
\end{remark}

\begin{corollary}
If $(X, L)$ is an exact filling, then $A_{\Gamma, F, v} \cdot 1 = 0 \in Sk(L)$. 
\end{corollary}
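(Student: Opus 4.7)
The plan is to apply Theorem \ref{main theorem} to the filling $(X, L)$ and then observe that in the exact case the skein-valued partition function degenerates to the unit of the skein. More precisely, Theorem \ref{main theorem} gives the identity
\[
A_{\Gamma, F, v}\,\Psi_{X, L} = 0 \qquad \text{in } \widehat{Sk}(L),
\]
so it suffices to show that for an exact filling one has $\Psi_{X, L} = 1$. Then substituting and multiplying out yields the desired conclusion $A_{\Gamma, F, v} \cdot 1 = 0 \in Sk(L)$.

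To establish $\Psi_{X,L} = 1$, I would argue by positivity of area. Recall $\Psi_{X,L}$ is a sum $\sum_{C} \epsilon_{C} z^{-\chi(C)} a^{C \cap V} [\partial C]$ over ghost-bubble-free $J$-holomorphic curves $C$ in $X$ with boundary on $L$ and no Reeb punctures (cf.\ Section \ref{sob story}). If $\lambda$ is a primitive of the symplectic form and $\lambda|_{L} = df$ (exactness of $L$, using that $L$ is cylindrical at infinity so $\lambda|_L$ vanishes there), then Stokes' theorem gives
\[
\int_{C}\omega \;=\; \int_{\partial C} \lambda \;=\; \int_{\partial C} df \;=\; 0
\]
for any such $C$. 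By $J$-holomorphicity, $\int_{C}\omega \ge 0$ with equality only for constant components. A genuine non-constant holomorphic curve would therefore have to contain a ghost component, and such configurations are excluded from the ghost-free count. So only the empty configuration survives, and its skein contribution is $1$.

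The main thing to be careful about is the precise meaning of the ghost-free count from \cite{SOB, ghost, bare} and the normalization conventions (Euler characteristic of the empty curve, the value of $[\partial\emptyset]$, etc.) — but once one unpacks these, each gives the unit $1 \in \widehat{Sk}(L)$. I do not expect any genuine obstacle here; the content of the corollary is essentially that Theorem \ref{main theorem} specialized to an exact filling produces an identity in the skein of $L$ alone, with $\Psi_{X,L}$ eliminated. The interest of the statement lies in what it says about the skein of exact fillings, not in the difficulty of its derivation.
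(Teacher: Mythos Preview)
Your proposal is correct and follows the same approach as the paper: apply Theorem~\ref{main theorem} and then note that for an exact $(X,L)$ there are no nonconstant holomorphic curves, so $\Psi_{X,L}=1$. The paper states this in a single line (``An exact Lagrangian bounds no curves, so $\Psi_{X,L}=1$''); your Stokes/positivity argument is just the standard unpacking of that assertion.
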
 
\begin{proof}
An exact Lagrangian bounds no curves, so $\Psi_{X, L} = 1$. 
\end{proof}

\begin{remark}
Another obstruction to exact fillings, 
used in \cite{Treumann-Zaslow}, is that an exact filling with topology of a handlebody
contributes a $\mathbb{G}_m^g$ to the augmentation variety, so in particular the point count
of the augmentation variety over $\mathbb{F}_q$ is bounded below by $(q-1)^g$.  Here we are instead
quantizing the $\mathbb{G}_m^g$.  It would be interesting
to understand better the relationship between these approaches.  
\end{remark} 

\bibliographystyle{hplain}
\bibliography{skeinrefs}

\end{document}